\newtheorem{theorem}{Theorem}[section]
\newtheorem{lemma}[theorem]{Lemma}
\newtheorem{proposition}[theorem]{Proposition}
\newtheorem{corollary}[theorem]{Corollary}
\newtheorem{definition}[theorem]{Definition}
\newtheorem{example}[theorem]{Example}
\newcommand{\eps}{\epsilon}
\newcommand{\R}{\mathbb{R}}
\newcommand{\C}{\mathbb{C}}
\newcommand{\N}{\mathbb{N}}
\newcommand{\Z}{\mathbb{Z}}
\newcommand{\T}{\mathbb{T}}
\newcommand{\cTrunc}{\hat{c}}
\newcommand{\cO}{\mathcal{O}}
\newcommand{\cX}{\mathcal{X}}
\title{
Quasiperiodicity and blowup in integrable subsystems of nonconservative nonlinear Schr\"odinger equations}
\author{	Jonathan~Jaquette
}
\affil{\small
	Department of Mathematics and Statistics, Boston University,
	Boston, MA 02215, USA. \newline 
	\texttt{jaquette@bu.edu}  
}
\begin{document}

\maketitle

\begin{abstract} 
	In this paper, we study the dynamics of a class of nonlinear Schr\"odinger equation $	i u_t = \triangle u + u^p $ for $ x \in \mathbb{T}^d$. 
	We prove that the PDE is   integrable on the space of non-negative Fourier coefficients, in particular that  each Fourier coefficient of a solution can be explicitly solved by quadrature. 
	Within this subspace we demonstrate a large class of (quasi)periodic solutions all with the same frequency, as well as solutions which blowup in finite time in the $L^2$ norm.  
\end{abstract}

\bigskip

{\bf Keywords : } Nonlinear Schr\"odinger equations, integrable system, (quasi)periodic orbits, finite time blowup,  nonconservative equation.

\bigskip
\bigskip
\centerline{{\bf AMS subject classifications}}
\medskip
\centerline{
	35B10,  
	35B44, 
	35Q55,  	
	37K10 

}

\section{Introduction}

In this paper we consider the dynamical behavior and  global wellposedness of solutions to the spatially periodic nonlinear Schr\"odinger equation 
\begin{align} \label{eq:NLS}
\begin{dcases}
i u_t = \triangle u + u^p,  & t \in \R,\, x \in \T^d\\
u(0) = u_0, &  x \in \T^d.
\end{dcases}
\end{align} 
Nonlinear Schr\"odinger equations (NLS) have been extensively studied by mathematicians and physicists, and broadly serve as reduced models for nonlinear dispersive equations. 
The particular equation in \eqref{eq:NLS} however falls outside the class of NLS most commonly considered:  
the nonlinearity here is not gauge invariant,  $( e^{i \theta} u)^p \neq e^{i \theta } u^p$ for generic $ \theta \in \R$, 
and \eqref{eq:NLS} does not admit an obvious Hamiltonian structure. 
Moreover \eqref{eq:NLS} does not admit any analytic conserved quantities  \cite{jaquette2020global}. 
As such many classical methods of analysis, such as proving global existence via conservation laws, are rendered inapplicable. 

Nevertheless this class of equations has received sustained interest over the past several decades.
Most notably its local well-posedness theory can be extended to negative Sobolev spaces\cite{kenig1996quadratic,bejenaru2006sharp}. 
Recently the quadratic case has been studied as a toy model for the perturbation of a Hamiltonian NLS by an electric field \cite{leger2018global}, and similar nonlinearities appear in other applications  \cite{gustafson2006scattering,colin2009stability,hayashi2013system}.  
For further references we direct the reader to \cite{kishimoto2019remark,jaquette2020global} and the references contained therein.

While \eqref{eq:NLS} may not be Hamiltonian, the complex analyticity of the  nonlinearity imposes a different type of structure.  
Of particular interest to us is that the space of functions supported on non-negative Fourier modes forms an invariant subsystem. 
Moreover in Theorem \ref{prop:Intro_Integrability} we prove this to be an integrable subsystem. 
That is, each Fourier coefficient of a solution can be explicitly solved by quadrature. 
Using these explicit formula we are able to study the dynamics of \eqref{eq:NLS} in great detail. 
Namely,  we show in Theorem \ref{prop:Intro_Quasiperiodic} that small initial data supported on strictly positive Fourier modes will yield (quasi)periodic orbits. 
Moreover these quasiperiodic orbits all have the same frequency vector as solutions to the linear Schr\"odinger equation. 
Hence if $\T^d$ is a rational torus then these quasiperiodic solutions are simply periodic. 
By leveraging this rigidity of allowable frequencies, we prove in Theorem \ref{prop:Intro_Blowup} that sufficiently large monochromatic initial data to the quadratic case of \eqref{eq:NLS} will blow up in finite time in the $L^2$ norm.

To begin our analysis let us consider first the dynamics of spatially constant solutions. 
Such an ansatz yields the complex ODE $i \dot{z} = z^p$, wherein the complex plane is foliated by homoclinic solutions to zero, with the exception of a finite number of solutions which blowup in finite (positive or negative) time. 
Much of this behavior carries over to the whole PDE in \eqref{eq:NLS}. In \cite{jaquette2020global} it is shown that close to constant initial data (of arbitrarily large norm) is guaranteed to have semi-global existence, and moreover solutions will converge to the zero equilibrium  as time limits to either positive or negative infinity. As a consequence there exists an open set of homoclinic orbits from the zero equilibrium to itself.  
If there were to exist any analytic conserved quantity they would necessarily be constant on this open set, and thereby globally constant.

Recent work has  illuminated intricate dynamics in the case of \eqref{eq:NLS} on $\T^1$ with a quadratic nonlinearity. 
From their numerical experiments \cite{Cho2016} observed that real initial data both large and small appears to decay to zero, and they conjectured that real initial data would be globally wellposed.   
Using computer assisted proofs, the work \cite{jaquette2020global}  has demonstrated infinite families of non-trivial equilibria and heteroclinic orbits between these equilibria and the zero equilibrium \cite{jaquette2020global}. 
Many questions remain about the   dynamical behavior of solutions, such as the existence and stability of (quasi)periodic orbits, the possibility of connecting orbits between the non-trivial equilibria, etc. 
At a more foundational level it is unclear when solutions will be globally wellposed. As seen by the ODE $i \dot{z} = z^p$, a smallness condition on the initial data  is by no means sufficient to preclude finite time blowup.

For comparison to another nonlinear Schr\"odinger equation lacking gauge invariance one may consider the equation $ i u_t = \triangle u + |u|^p$. 
By studying the dynamics of the zeroth Fourier mode it has been shown that a robust set of initial data will blowup in finite time
\cite{oh2012blowup,ikeda2015some,fujiwara2017lifespan}.  For the nonlinearity $|u|^2$   recent work has shown that initial data $u_0 \in L^2(\T^1)$ will have a global solution if and only if $ u_0(x) = i \mu_0 $ for $\mu_0 \geq 0$ \cite{fujiwara2020necessary}. 
While global solutions are rare for this PDE, that does not seem to be the case in \eqref{eq:NLS}. 
However without an obvious Hamiltonian structure or conserved quantities it is unclear how to characterize which solutions are globally wellposed.  

It seems counterintuitive that a PDE without analytic conserved quantities would be integrable. 
The term integrable originated to refer to systems whose solutions could be expressed in terms of elementary functions and quadrature, which is to say the definite integrals of known functions, and in Hamiltonian dynamics integrability  coincides with the existence of a complete set of independent integrals of motion \cite{kozlov1983integrability,ramani1989painleve}. 
Integrable PDEs, such as the Korteweg-de Vries equation, are often characterized by possessing an infinite number of conserved quantities, the existence of a Lax pair, and  explicit solvability (e.g. via the inverse scattering transform method).

One such integrable PDE is the cubic Szeg\H{o} equation $ i \partial_t u = \Pi(|u|^2 u)$ which is posed on initial data $u_0 \in L^2_+( \T,\C)$ supported on non-negative Fourier coefficients \cite{gerard2010cubic,gerard2012invariant,gerard2015explicit}, where the Szeg\H{o} projection  $ \Pi : L^2( \T,\C) \to L^2_+( \T,\C)$ is   defined as 
 \[
 \Pi \left(
 \sum_{- \infty < n < \infty} 
 \hat{f}(n) e^{i \omega n x} 
 \right)
 = \sum_{0 \leq n \leq \infty}  \hat{f}(n) e^{i \omega n x}
 , \qquad \qquad 
 f \in L^2( \T,\C) .
 \] 
This equation can be seen as a toy model for totally non-dispersive equations.
In other integrable PDEs, such as the one dimensional defocusing cubic NLS, the infinite hierarchy of conservation laws prevents so-called weak turbulence, which is to say the unbounded growth of a solution's higher Sobolev norms. 
This phenomenon is not prohibited by the cubic Szeg\H{o} equation, which admits solutions whose concentration of  energy undergoes infinitely many transitions alternating from low frequencies to high frequencies, and back to low frequencies \cite{gerard2017cubic,gerard2019survey}.

It is also on the space of non-negative Fourier coefficients that we show in 
Theorem \ref{prop:Intro_Integrability} that \eqref{eq:NLS} possesses an integrable subsystem. 
The invariance of this subsystem can be seen from the fact that the nonlinearity $u^p$ satisfies $ \Pi \circ \Pi(u)^p = \Pi(u)^p$. 
However there cannot exist any analytic conserved quantities on this subspace of non-negative Fourier coefficients by the same argument as in \cite{jaquette2020global}. 
Nevertheless that does not rule out the existence of singular conserved quantities. 
In fact the ODE $ i \dot{z} = z^p$, whose explicit solution is given \eqref{eq:ZeroModeSolution}, preserves the functional $V(z) := \frac{1}{z^{p-1} } + \frac{1}{\bar{z}^{p-1}}$. 
For solutions to \eqref{eq:NLS} supported on non-negative Fourier modes this  functional $V$, when evaluated on a solution's zeroth Fourier mode, is conserved under the dynamics. 
Thus, it does not seem unreasonable to conjecture that there may exist an infinite hierarchy of singular conserved quantities. 
For further  references on singular symplectic forms we direct the  reader to \cite{braddell2019invitation}.

In this paper we will primarily work with a Wiener algebra of functions with summable Fourier coefficients.  
To first fix notation, for $n = (n_1,\dots n_d) \in \Z^d$ define $ | n| = |n_1 | + \dots + | n_d|$. 

\begin{definition}
	For $\omega \in \R^{d} $, the torus $\T^d = \prod_{i=1}^d \R / \frac{2 \pi}{\omega_i} \Z$, a number $s \geq 0$ and $ u \in L^1(\T^d)$ we define a norm 
	\begin{align*}
	\| u \| &= \sum_{n \in \Z^d} (1+|n|)^s | \hat{u}(n) | ,
	&
	\hat{u}(n) &= \frac{1}{\mbox{vol}(\T^d)} \int_{\T^d} u(x) e^{-i \omega n x}.
	\end{align*}
	We define the weighted Wiener algebra $ A_s(\T^d) \subseteq L^1(\T^d)$ to be the set of functions $ u $ with $ \|u\| < \infty$.
\end{definition} 
If one chooses $s=0$, then one obtains the canonical  Wiener algebra of functions with summable Fourier coefficients, which embeds between the spaces  $ Lip(\T^d,\C)  \subseteq A_0(\T^d) \subseteq C^0(\T^d , \C)	$. 
The space  $A_s(\T^d)$ is a Banach algebra, which is to say that $\| u v\| \leq \|u\|\, \|v \|$ for all $ u,v \in  A_s(\T^d)$, and the PDE \eqref{eq:NLS} is locally well posed on $ A_s(\T^d)$  for all $ s \geq 0$.

 To focus on the space of non-negative Fourier coefficients,  we define the following subspace of the Wiener algebra 
 \[
 A^+_{s}(\T^d) = 
 \{
 u \in A_s ( \T^d) :   \hat{u}(n) \neq 0 \mbox{ only if } \min_{1 \leq i \leq d} n_i \geq  0  
 \} .
 \]
 The foundational result we prove in this paper is that we can explicitly compute solutions to the PDE \eqref{eq:NLS} on this subspace, which is to say that   $ A^+_{s}(\T^d)$ is an integrable subspace.

 \begin{theorem} \label{prop:Intro_Integrability}
 	Fix initial data $ u_0 \in A_s^+(\T^d)$ for $ s \geq 0$ and suppose $u(t)$ solves \eqref{eq:NLS}  with initial data $ u(0) = u_0$. 
 	Let $ J = (-T_{min},T_{max})$ denote the maximal interval of existence and define functions $a_n : \R\to \C$ such that  
 	\[
 	u(t,x) = \sum_{n \in \N^d} a_n(t) e^{i \omega n x}
 	\]
 	for all $ t \in J$. 
 	Then each  function $ a_n(t)$ may be solved for explicitly by quadrature.
 \end{theorem}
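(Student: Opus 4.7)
My plan is to substitute the Fourier ansatz into \eqref{eq:NLS}, derive an infinite system of ODEs for the coefficients $a_n(t)$, and exploit the fact that the non-negative Fourier support gives this system a strict triangular structure: the equation for $a_n$ will involve only $a_n$ itself and finitely many coefficients $a_m$ with $|m|<|n|$, in a form that can be integrated explicitly.

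Inserting $u(t,x)=\sum_{n\in\N^d} a_n(t)\, e^{i\omega n x}$ into \eqref{eq:NLS} and matching Fourier modes gives, for each $n\in\N^d$,
\begin{equation*}
i \dot a_n(t) \;=\; -\lambda_n\, a_n(t) \;+\; \sum_{\substack{n_1,\dots,n_p\in\N^d\\ n_1+\cdots+n_p=n}} a_{n_1}(t)\cdots a_{n_p}(t),
\end{equation*}
where $\lambda_n = \sum_{j=1}^d \omega_j^2 n_j^2$. The key observation is that on the non-negative orthant the $\ell^1$-norm is additive: any tuple appearing in the convolution obeys $\sum_i |n_i| = |n|$, so every $|n_i|\le|n|$, with equality $|n_i|=|n|$ forcing $n_i=n$ and all other entries zero. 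Thus for $n=0$ the only admissible tuple is $(0,\dots,0)$, yielding the decoupled scalar ODE $i\dot a_0 = a_0^p$ whose solution by quadrature is $a_0(t) = \bigl[a_0(0)^{1-p} + i(p-1)t\bigr]^{1/(1-p)}$. For $n\neq 0$, exactly $p$ tuples are permutations of $(n,0,\dots,0)$, together contributing $p\, a_0(t)^{p-1} a_n(t)$, while every remaining tuple consists of indices each of $\ell^1$-norm strictly less than $|n|$; gathering those terms into $F_n(t)$, a polynomial in $\{a_m(t) : |m|<|n|\}$, the equation becomes the scalar linear inhomogeneous ODE
\begin{equation*}
\dot a_n \;-\; i\bigl[\lambda_n - p\, a_0(t)^{p-1}\bigr]\, a_n \;=\; -i F_n(t).
\end{equation*}

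I then proceed by strong induction on $|n|$. Since only finitely many $n\in\N^d$ have any given $\ell^1$-norm, at each step $F_n$ is a finite polynomial in previously-solved coefficients and is itself known by quadrature. Variation of parameters with integrating factor $e^{-i\Lambda_n(t)}$, where $\Lambda_n(t) = \lambda_n t - p\int_0^t a_0(s)^{p-1}\,ds$, then yields
\begin{equation*}
a_n(t) \;=\; e^{i\Lambda_n(t)}\left( a_n(0) - i \int_0^t F_n(\tau)\, e^{-i\Lambda_n(\tau)}\,d\tau \right),
\end{equation*}
which is manifestly an expression by quadrature, closing the induction.

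The only genuine subtlety is justifying the termwise differentiation that extracts the ODE system from the PDE -- namely, verifying that the Fourier coefficients of the $A_s$-solution really do satisfy the displayed equations on $J$. Here I would invoke the local well-posedness of \eqref{eq:NLS} on $A_s(\T^d)$ together with the invariance $\Pi(\Pi u)^p = (\Pi u)^p$ highlighted in the introduction: this keeps the solution in $A_s^+$ for all $t\in J$, so the Fourier series of $u(t)$ and of $u(t)^p$ converge absolutely in $A_s$ and Fourier coefficients can be matched directly. Beyond this bookkeeping I do not anticipate any real obstacle; the triangular structure forced by the non-negative Fourier support makes the induction essentially automatic, and the only place analysis (as opposed to algebra) enters is in certifying that the explicit formulas remain valid up to the maximal time of existence.
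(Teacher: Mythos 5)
Your proposal is correct and takes essentially the same route as the paper: expand in Fourier modes, solve the decoupled zero mode explicitly, and induct on $|n|$, writing the $n$-th equation as a scalar linear inhomogeneous ODE whose forcing involves only lower modes and integrating it by an explicit integrating factor (your $\Lambda_n$ and $F_n$ are exactly the paper's $\int_0^t P_n$ and $-iQ_n$). The only cosmetic point is that your closed form for $a_0$ should be read as $a_0\equiv 0$ when $a_0(0)=0$; the paper's equivalent expression $\phi_0\bigl(1+i(p-1)\phi_0^{p-1}t\bigr)^{-1/(p-1)}$ covers that case automatically.
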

 
 If the initial data $u_0 \in A_s^+(\T^d)$ has zero average, then the formula for computing solutions simplifies, as we present in Corollary \ref{thm:ExplicitSolution}. As an example, we present this result for the quadratic NLS posed on $\T^1$. 
\begin{example}	
	\label{eg:Recursive}
	Consider \eqref{eq:NLS} with $(p,d)=(2,1)$ and fix initial data $ u_0 \in A_0^+(\T^1)$.   
	Let $ J = (-T_{min},T_{max})$ denote the maximal interval of existence and define functions $a_n : \R\to \C$ such that  
	\[
	u(t,x) = \sum_{n=0}^\infty  a_n(t) e^{i \omega n x}
	\]
	for all $ t \in J$. Let  $ \phi_n = \hat{u}_0(n)$ for all $ n \in \N$. If $ \phi_0 = 0$ then the functions $a_n $ are recursively defined as 
	\begin{align*} 
	a_0(t) &:=  0 , \\
	a_n(t) &:= \phi_n e^{i \omega^2 n^2 t}  -  i e^{i \omega^2 n^2 t} 
	\int_0^t  e^{-i \omega^2 n^2 s} \sum_{1\leq k \leq n-1} a_{k}(s) a_{n-k}(s) ds. 
	\end{align*}
\end{example}

This style of recursively defining the Fourier coefficients is similar to previous results in the literature studying other complex PDEs with quadratic nonlinearity. 
The work \cite{sakajo2003blow} used such an approach to study blowup solutions of the Constantin-Lax-Majda  equation with periodic data. 
A similar approach appears in  \cite{guo2013convergence} to study $ u_t = \triangle u + u^2$ and in 
\cite{bejenaru2006sharp,iwabuchi2015ill} to study $ i u_t + \triangle u = u^2$, each considering complex  initial data $u_0 :\R^d \to \C$.  

As an illustrative example of the functions $a_n(t)$ defined in Example \ref{eg:Recursive}  we consider the case of monochromatic initial data.  
\begin{example} \label{eg:ExplicitFunctions}
	Fix $ A \in \C$ and initial data $ u_0(x) = A e^{i \omega x}$. 
	The first few functions $ a_n : \R \to \C$ defined as in Example \ref{eg:Recursive} are given as follows 
	\begin{align*}
		a_1(t) &= A e^{i \omega^2 t} \\
		a_2(t) &= \frac{A^2}{\omega^2} 
		\left( \frac{1}{2} e^{2 i \omega^2  t}  - \frac{1}{2} e^{4 i \omega^2  t}  \right)\\
		a_3(t) &=  \frac{A^3}{\omega^4}
		\left( \frac{1}{6} e^{3 i \omega^2  t}
		- \frac{1}{4} e^{5 i \omega^2  t} 
		+ \frac{1}{12} e^{9 i \omega^2  t}  \right) \\
		a_4(t) &=  \frac{A^4}{\omega^6}
		\left(
		\frac{7 e^{4 i \omega ^2t}}{144 }-\frac{e^{6 i  \omega ^2t}}{10 }+\frac{e^{8 i  \omega ^2t}}{32 }+\frac{e^{10 i  \omega ^2t}}{36}-\frac{11 e^{16 i  \omega ^2t}}{1440 } 
		\right)  \\
		a_5(t) &= 	
		\frac{A^5}{\omega^8}	
				\left(
		\frac{19 e^{5 i t \omega ^2}}{1440 }-\frac{37 e^{7 i t \omega ^2}}{1080  }+\frac{5 e^{9 i t \omega ^2}}{256}+\frac{5 e^{11 i t \omega ^2}}{504  }-\frac{e^{13 i t \omega ^2}}{144 }-\frac{11 e^{17 i t
				\omega ^2}}{5760 }+\frac{113 e^{25 i t \omega ^2}}{241920  }
			\right)  
	\end{align*}
\end{example}

Note that each function $ a_n(t)$ is given by   $ A^n/ \omega^{2(n-1)}$, multiplied by a function which is $ 2 \pi /\omega^2$ periodic in time.  
This is shown to be the case in general in Theorem \ref{prop:Rescaling}. 
If the ratio $ |A|/\omega^2$ is small, we expect $ \| a_n\|_{L^\infty}$ to shrink geometrically in $n$, and expect the solution $u$ to be $2 \pi / \omega^2$ periodic. Likewise, if the ratio is large, then we would expect the coefficients to grow geometrically in $n$, and the solution  $u$ to blowup. 
 In Figure \ref{fig:Solutions} we plot  solutions for $\omega =1$ and values $ A =1,2,3$   with the  functions $a_n(t)$ computed up to $ n =300$. 
 Numerical evidence suggests that the ratio $|A|/\omega^2 \approx 3.37$ is the approximate dividing line between monochromatic initial data whose solution is periodic versus blows up in finite time. The following theorem summarizes what we can prove rigorously. 
 
 \begin{figure}[t!]
 	\centering
 	\includegraphics[width=1\textwidth]{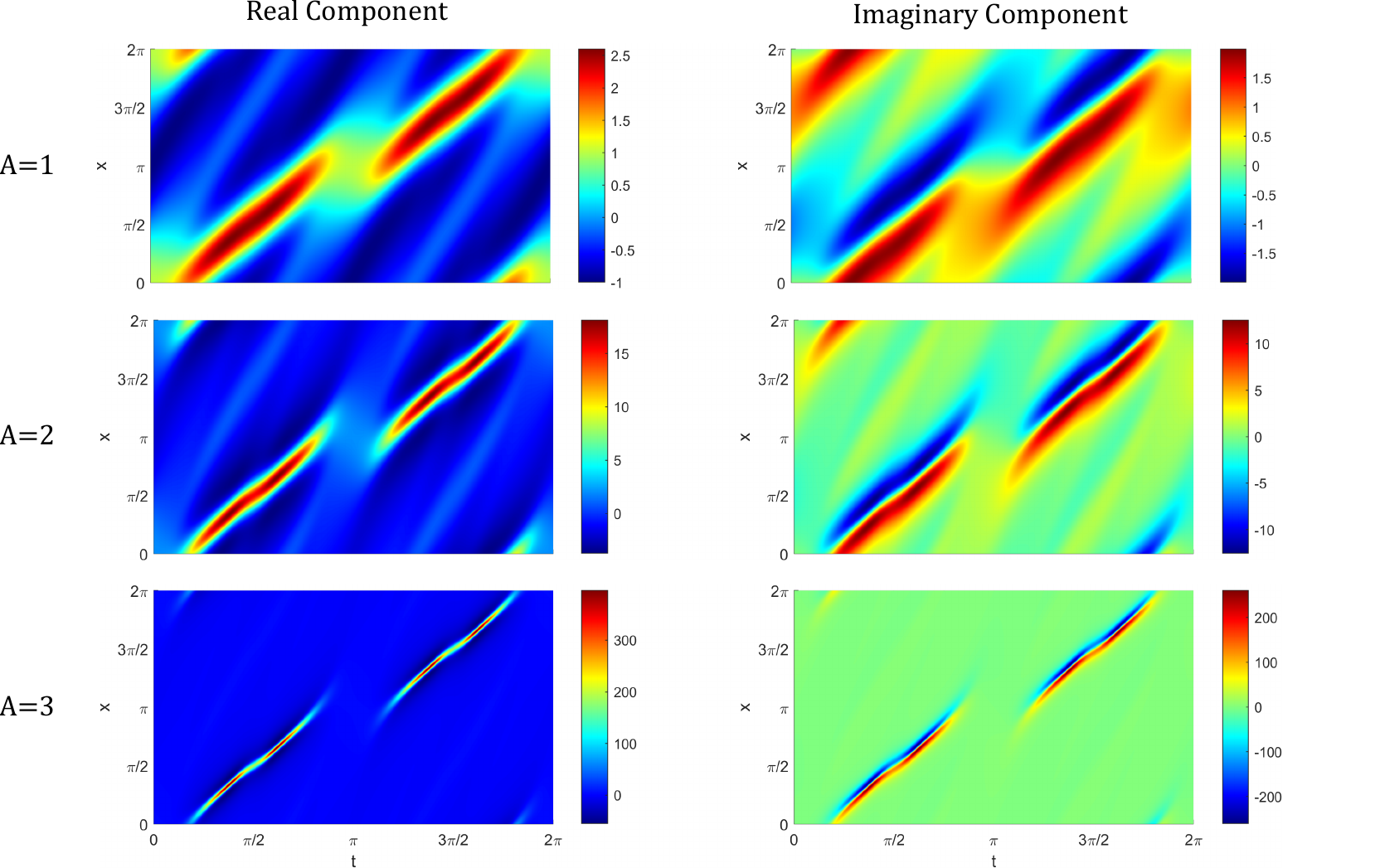} 
 		\caption{Plots of periodic solutions $u(t,x)$ with initial data $u_0(x) = A e^{i x}$.  
 	}
 	\label{fig:Solutions}
 \end{figure}

\begin{theorem} \label{prop:Intro_Blowup}
	Consider   \eqref{eq:NLS}  with $(p,d)=(2,1)$ and  	   initial data $u_0(x) =   A e^{ i \omega  x}$ with  $A \in \C$,  $ \omega >0$.   
	\begin{enumerate}[(a)]
		\item If $|A| \leq  3  \omega^2$, then  $u \in C^0( \R , A_0^+(\T^1))$ is a periodic solution with period $\frac{2 \pi}{\omega^2}$. 
		\item If $|A| \geq  6\omega^2$,  then the solution blows up in finite time in the $L^2$ norm. In particular, there exists some  $|T_*| \leq  \frac{2 \pi}{\omega^2}$ such that  $\limsup_{t \to T_*} \| u(t)\|_{L^2} = + \infty$.   
	\end{enumerate}
\end{theorem}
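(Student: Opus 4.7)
My strategy is to work directly with the recursive formula of Example~\ref{eg:Recursive} combined with the scaling of Theorem~\ref{prop:Rescaling}. The substitution $u(t,x)=\omega^{2}v(\omega^{2}t,\omega x)$ converts parameters $(A,\omega)$ to $(A/\omega^{2},1)$, so I may take $\omega=1$, and the thresholds become $|A|\leq 3$ and $|A|\geq 6$. Setting $a_n(t)=A^n b_n(t)$, each $b_n$ is an $A$-independent $2\pi$-periodic trigonometric polynomial with $b_1(t)=e^{it}$ and
\[
b_n(t)=-i e^{in^{2}t}\int_{0}^{t}e^{-in^{2}s}\sum_{k=1}^{n-1}b_k(s)\,b_{n-k}(s)\,ds,
\]
and a straightforward induction shows $\operatorname{supp}(\hat{b}_n)=\{j_1^{2}+\cdots+j_\ell^{2}:j_1+\cdots+j_\ell=n,\,j_i\geq 1\}\subseteq[n,n^{2}]$.

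For part (a), I aim to bound $|b_n|$ geometrically so that $\sum_n A^n b_n(t) e^{inx}$ converges in the Wiener algebra for $|A|\leq 3$. The key observation is that every frequency $m$ appearing in $b_k(s)\,b_{n-k}(s)$ with $1\leq k\leq n-1$ satisfies $m\leq(n-1)^{2}+1$, so the non-resonant denominator $|n^{2}-m|\geq 2(n-1)$ yields an $O(1/n)$ gain from each oscillatory Duhamel integral. This produces a recursive inequality of the form $N_n\leq\frac{C}{n-1}\sum_{k=1}^{n-1}N_k N_{n-k}$ for a suitable norm $N_n$ of $b_n$. Introducing the majorant $\Phi(z):=\sum_n N_n z^{n}$ converts the inequality into a Riccati-type differential inequality $z\Phi'-\Phi\leq C\Phi^{2}$, whose comparison with the exact solution bounds $\Phi$ on $|z|\leq 3$. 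Absolute convergence of the series in $A_0^+(\T^1)$ then follows, and periodicity of the limit is inherited from the $2\pi$-periodicity of each $b_n$.

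For part (b), I aim for a matching \emph{lower} bound, targeting the highest-frequency coefficient $\beta_n:=\hat{b}_n(n^{2})$. From the same Duhamel integration, $\beta_n=-\sum_m c_m/(n^{2}-m)$, where $c_m$ is the Fourier coefficient at frequency $m$ of $\sum_k b_k b_{n-k}$. The diagonal contribution from $m=k^{2}+(n-k)^{2}$, with denominator $2k(n-k)$, yields the approximate recursion $\beta_n\approx-\sum_k \beta_k\beta_{n-k}/(2k(n-k))$, whose majorant solution decays like $6^{-n}$. After controlling the off-diagonal contributions as a perturbation, one obtains $\sup_t|b_n(t)|\geq|\beta_n|\geq c/6^{n}$, and a diagonal/continuity argument produces a single $t_*\in[0,2\pi]$ for which $|b_n(t_*)|\geq c'/6^{n}$ holds along an infinite subsequence, making $\|u(t_*)\|_{L^{2}}^{2}=\sum_n|A|^{2n}|b_n(t_*)|^{2}$ divergent for $|A|\geq 6$. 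The bound $|T_*|\leq 2\pi/\omega^{2}$ then follows from a dichotomy: if $u$ existed on all of $[0,2\pi/\omega^{2}]$, periodicity of each $a_n$ would force $u(2\pi/\omega^{2})=u_0$ and the solution would extend periodically to a global one, contradicting the blowup.

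\textbf{Main obstacle.} The heart of the argument is the lower bound in part (b). Upper bounds tolerate cancellation, but forcing $|\beta_n|\geq c/6^{n}$ requires excluding destructive interference among the many off-diagonal contributions in $\beta_n=-\sum_m c_m/(n^{2}-m)$. I expect this to require propagating a definite sign pattern through the recursion --- for instance showing that $(-1)^n\beta_n>0$ --- so that the off-diagonal and diagonal pieces add constructively rather than cancel. The factor-of-two gap between the sharp thresholds $3$ and $6$ mirrors the gap between the lower bound $|n^{2}-m|\geq 2(n-1)$ used in part (a) and the upper bound $|n^{2}-m|\leq n^{2}$ controlling the worst terms in part (b).
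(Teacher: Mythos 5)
Both halves of your plan contain genuine gaps. For part (a), the majorant argument cannot reach the threshold $|A|\leq 3\omega^2$. Your recursive inequality (with the worst-case denominator $|n^2-m|\geq 2(n-1)$ and the factor $2$ from the $e^{i\omega^2 n^2 t}$ correction term) gives $N_n\leq \frac{1}{n-1}\sum_{k=1}^{n-1}N_kN_{n-k}$, whose solution with $N_1=1$ is $N_n\equiv 1$: the majorant $\Phi(z)=z/(1-z)$ has radius of convergence $1$, not $3$. Even the sharper term-by-term denominator $n^2-m\geq 2k(n-k)$ only pushes the radius of the positive-coefficient majorant to roughly $1.4$. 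Since the first several true coefficients do not yet decay geometrically (see Example~\ref{eg:ExplicitFunctions}), no induction of the form $N_n\leq C\rho^{-n}$ with $\rho=3$ can close either. This is exactly why the paper does not argue analytically near the critical value $A^*\approx 3.37$: it rigorously computes the blocks $c(3,1)_{n,j}$ for $n\leq N=110$ with interval arithmetic and then applies a Newton--Kantorovich (radii polynomial) argument, Theorem~\ref{prop:RadiiPolynomial} with the bounds of Lemma~\ref{prop:RadiiPolynomial_CAP}, to the tail map $T_\infty$ of \eqref{eq:TinftyDef}; the soft fixed-point argument of Theorem~\ref{prop:Intro_Quasiperiodic} alone only yields $|A|\leq\frac14\omega^2$.

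For part (b), you target the top temporal frequency $\beta_n=\hat b_n(n^2)$, and the obstacle you flag (cancellation among off-diagonal contributions) is fatal rather than technical: $\beta_n$ satisfies no closed recursion, your ``diagonal'' model already fails at $n=3$ (it predicts $1/4$ versus the true $1/12$), and the exact values $-\tfrac12,\ \tfrac1{12},\ -\tfrac{11}{1440},\ \tfrac{113}{241920}$ from Example~\ref{eg:ExplicitFunctions} have successive ratios $\approx 0.17,\ 0.09,\ 0.06$, i.e.\ they appear to decay faster than $6^{-n}$, so the claimed lower bound $|\beta_n|\geq c\,6^{-n}$ is likely false and in any case unproved. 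The paper's proof sidesteps all of this by looking at the \emph{lowest} temporal frequency $j=n$: because $c_{n,j}=0$ for $j<n$, the recursion \eqref{eq:FF_Recursive} closes exactly on the diagonal, $\tilde c_{n,n}=\frac{1}{n^2-n}\sum_{k=1}^{n-1}\tilde c_{k,k}\tilde c_{n-k,n-k}$, with all terms positive real, so there is no cancellation to control and a two-line induction gives $\tilde c_{n,n}\geq 6n/6^n$. Blowup then follows not from a pointwise-in-time diagonal argument but from Parseval in space--time: if the $L^2$ solution persisted past one period $2\pi/\omega^2$, double periodicity would force $\|u\|_{L^2([0,2\pi/\omega^2]\times\T)}^2\gtrsim\sum_n|A/\omega^2|^{2n}|6n/6^n|^2=\infty$ for $|A|\geq 6\omega^2$, a contradiction, which also delivers the bound $|T_*|\leq 2\pi/\omega^2$ directly. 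If you want to salvage your route, the essential missing idea is precisely this switch from the resonant top frequency to the self-contained, sign-definite bottom-frequency recursion.
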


The existence of periodic solutions is not limited to the special choice of monochromatic initial data; 
we show that for moderately sized initial data supported on positive Fourier modes, their solutions will be (quasi)periodic. 
\begin{theorem}
	\label{prop:Intro_Quasiperiodic}
	Consider \eqref{eq:NLS}   
	with initial data $u_0 \in A_s^+(\T^d)$ supported on strictly positive Fourier modes,  that is $ \hat{u}_0(n) \neq 0 $ only if $ n_{i} \geq 1 $ for all  $ 1 \leq i \leq d$.  If  
	\[
	\|u_0 \|  \leq    \frac{p-1}{p} \left( \frac{\|\omega\|^2 (p-1)}{2}\right)^{1/(p-1)},
	\]
	where $\|\omega\|^2  = \sum_{1 \leq i \leq d} \omega_i^2$, 
	then the solution $u \in C^0(\R, A_s^+(\T^d))$ is (quasi)periodic with  frequencies $\{ \omega_i^2 / 2 \pi  \}_{i=1}^d$ and $\| u(t) \| \leq  \left( \frac{\|\omega\|^2 (p-1)}{2}\right)^{1/(p-1)}$ for all $ t\in \R$.   
\end{theorem}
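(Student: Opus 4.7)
The plan is to combine the explicit Duhamel-type recursion furnished by Theorem~\ref{prop:Intro_Integrability} (in the zero-average form of Corollary~\ref{thm:ExplicitSolution}) with a small-divisor estimate to extract a uniform a priori bound on $\|u(t)\|$; (quasi)periodicity will then fall out of the explicit structure of the formula. Because $u_0$ is supported on strictly positive Fourier modes, $\phi_0 = 0$, and an induction on $|n|$ in the recursion shows that $a_n(t)\equiv 0$ whenever some component $n_i$ vanishes. For the remaining multi-indices the recursion reads
\[
a_n(t) \,=\, \phi_n e^{i\|\omega n\|^2 t} \,-\, i e^{i\|\omega n\|^2 t}\int_0^t e^{-i\|\omega n\|^2 s} \sum_{m_1+\cdots+m_p=n} a_{m_1}(s)\cdots a_{m_p}(s)\,ds,
\]
where the sum runs over decompositions into multi-indices with all components $\geq 1$; below I write $\|\omega n\|^2 := \sum_i \omega_i^2 n_i^2$ and $\omega^2 := (\omega_1^2,\ldots,\omega_d^2)$.

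The first step is a structural claim: by induction on $|n|$, every $a_n(t)$ has the form $\sum_{k\in S_n}\alpha_{n,k}\, e^{i k\cdot\omega^2 t}$ with $S_n \subset \N^d$ finite and $k_i \leq n_i^2$ for every $k \in S_n$. The inductive step rests on the elementary identity
\[
\|\omega n\|^2 - \sum_j \|\omega m_j\|^2 \,=\, 2\sum_i \omega_i^2 \sum_{j<\ell} m_{j,i}\,m_{\ell,i} \,\geq\, p(p-1)\|\omega\|^2,
\]
valid for every such decomposition since each $m_{j,i}\geq 1$. This strict gap precludes any resonance between the internal frequency $\Lambda := \sum_j k^{(j)}\cdot\omega^2$ (with $k^{(j)}\in S_{m_j}$) and the linear frequency $\|\omega n\|^2$, so integration of each oscillatory summand contributes no secular term but only two exponentials, at frequencies $\Lambda$ and $\|\omega n\|^2$, each weighted by $(\|\omega n\|^2-\Lambda)^{-1}$. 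In particular every $a_n$ is automatically quasiperiodic in $t$ with frequency vector $(\omega_1^2,\ldots,\omega_d^2)/2\pi$.

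The main step is the a priori bound. Setting $\beta_n := \sum_{k\in S_n}|\alpha_{n,k}|$, the preceding formula yields
\[
\beta_n \,\leq\, |\phi_n| + \frac{2}{p(p-1)\|\omega\|^2}\sum_{m_1+\cdots+m_p=n}\beta_{m_1}\cdots\beta_{m_p}.
\]
Multiplying by $(1+|n|)^s$, invoking submultiplicativity $(1+|n|)^s\leq\prod_j (1+|m_j|)^s$, and summing over $|n|\leq N$ closes the recursion as
\[
M_N := \sum_{|n|\leq N}(1+|n|)^s \beta_n \,\leq\, \|u_0\| + \frac{2}{p(p-1)\|\omega\|^2}\, M_{N-1}^p.
\]
Writing $R := ((p-1)\|\omega\|^2/2)^{1/(p-1)}$, the identity $\tfrac{2}{p(p-1)\|\omega\|^2}\,R^p = R/p$ together with the hypothesis $\|u_0\|\leq \tfrac{p-1}{p}R$ closes an induction on $N$: $M_{N-1}\leq R$ forces $M_N\leq R$. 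Letting $N\to\infty$ gives the uniform bound $\sup_t \|u(t)\|\leq R$, which extends the local solution globally, and the structural lemma then delivers (quasi)periodicity with the claimed frequency vector.

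The principal obstacle is securing the small-divisor bound $\|\omega n\|^2 - \Lambda \geq p(p-1)\|\omega\|^2$; this is precisely where the strictly positive support hypothesis enters, for without it $\Lambda$ could equal $\|\omega n\|^2$, producing secular terms that would destroy both the trigonometric polynomial structure of the $a_n$ and the convergent Neumann-type estimate on $M_N$.
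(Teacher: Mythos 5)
Your argument is correct, and it reaches the same quantitative threshold as the paper by the same underlying estimates, but it closes the argument by a different mechanism. The paper first packages the structural facts you re-derive (each $a_n$ is a trigonometric polynomial with frequencies $\omega^2 j$, $n\le j\le n^2$, and the nonlinearity only produces frequencies $j\le n^2-(p-1)(2n-p)$, whence the divisor $\omega^2(n^2-j)\ge p(p-1)\|\omega\|^2$) into Lemmas \ref{prop:Xclosed}--\ref{prop:FF_Recursive}, then recasts the recursion \eqref{eq:FF_Recursive} as a fixed-point problem for the compact map $T(c)=\iota\phi+(I-L)Kc^p$ on the space $X$ of space--time coefficients, and applies the Schauder fixed point theorem on the ball of radius $r_0=\bigl(\tfrac{\|\omega\|^2(p-1)}{2}\bigr)^{1/(p-1)}$; your identity $\tfrac{2}{p(p-1)\|\omega\|^2}R^p=R/p$ is exactly the paper's computation that the radii polynomial $P(r)$ is nonpositive at $r_0$, and your gap inequality is the $J^+_p$ computation of Lemma \ref{prop:Xclosed} in disguise. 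You instead exploit the lower-triangular structure of the recursion directly and close a monotone induction on the truncated sums $M_N\le\|u_0\|+\tfrac{2}{p(p-1)\|\omega\|^2}M_{N-1}^p$, which dispenses with any fixed-point theorem (no compactness of $K$, no Schauder) and is more elementary; what the paper's operator-theoretic framing buys is reusability, since the same map $T$ and ball estimates are the backbone of the Newton--Kantorovich computer-assisted proof of Theorem \ref{prop:Intro_Blowup}(a). Two small points you should spell out: in the inductive step the new frequency indices are $k=\sum_j k^{(j)}$ with $k_i\le\sum_j m_{j,i}^2\le n_i^2$ (so the hypothesis $k\le n^2$ propagates, and collecting coincident frequencies only decreases $\sum_k|\alpha_{n,k}|$), and globality requires identifying the bounded quasiperiodic candidate with the maximal solution via uniqueness/the continuation criterion, as the paper does when it reduces the theorem to $\|c\|<\infty$.
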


It is interesting to note that the (quasi)periodic orbits in Theorem \ref{prop:Intro_Quasiperiodic} all have locked frequencies. 
If $\T^d$ is a rational torus then these solutions are simply periodic, and if the set  $\{\omega_i^2\}_{i=1}^d$ is rationally independent then each solution densely wraps around an invariant torus of dimension $d$. 
This frequency locking makes the problem of characterizing (quasi)periodic solutions extremely resonant; each of the periodic orbits in Theorem \ref{prop:Intro_Blowup} has a Floquet multiplier of $1$ with infinite multiplicity.

This resonance may be seen in contrast to the Hamiltonian NLS $ i u_t = \triangle u \pm |u|^{p-1} u$ where monochromatic initial data yields periodic solutions with amplitude dependent frequency. 
This may also be seen in contrast to the nonresonance conditions one typically imposes in the study of quasiperiodic orbits to overcome the problem of small divisors.
In Hamiltonian PDEs there are two widespread methods for proving existence of quasiperiodic orbits: one based on a Lyapunov-Schmidt reduction \cite{craig1993newton,bourgain1998quasi} and the other based on the KAM method \cite{kuksin1996invariant,procesi2015kam}. 
While the extreme resonance in the frequencies of the quasiperiodic solutions to \eqref{eq:NLS} may seem daunting, we are able to leverage the explicit formula for solutions in Theorem \ref{prop:Intro_Integrability} to great effect, and  circumvent the delicate analysis required of the standard  methods.

As can be seen from Example \ref{eg:Recursive}, we have a formula for computing a solution's Fourier coefficients to arbitrary order. However there is no \emph{a priori} guarantee that this series will converge for all $ t \in \R$.  
Indeed, the monochromatic initial data $u_0(x) = A e^{i \omega x}$ in Theorem \ref{prop:Intro_Blowup} gives a parameterized family of periodic orbits which terminates in a finite time blowup if $ |A| / \omega^2 $ is too large. 
As a result of each periodic orbit having period $2 \pi / \omega^2$, the blowup time is bounded above by $ 2 \pi / \omega^2$. 
The periodic orbits in Figure \ref{fig:Solutions} suggests that the blowup will occurs at single point $ x \in \T^1$. 
Unfortunately  we cannot hope for the existence of a simple blowup profile, as the only solutions of the form $ u(t,x) = \psi(t) \varphi(x)$ are either constant in $t$ or $x$ \cite{jaquette2020global}. 

We have primarily studied solutions with initial data supported on strictly positive Fourier modes  
as this avoids the introduction of secular terms. 
This is not the case for generic initial data in $ A_0^+(\T^d)$, such as those with a nontrivial zeroth Fourier mode. 
It should be noted that, as Theorem \ref{prop:Intro_Integrability} allows us to exactly compute solutions, this appearance of secular terms are not an artifact of a perturbative ansatz, but an aspect  of the genuine solution.  
While the local wellposedness of these solutions is guaranteed, a complete understanding of the dynamics in $A_0^+(\T^d)$  is not clear.

More broadly, it is interesting to ask how perturbations of solutions in $A^+_0(\T^d)$ behave in the whole space $A_0(\T^d)$. 
For example, what is the local stability of the  (quasi)periodic orbits in Theorem \ref{prop:Intro_Quasiperiodic}? 
Furthermore, if some initial data $u_0 \in A_0^+(\T^d)$ blows up in finite time, will perturbations of it in $A_0(\T^d)$  also blowup?

The rest of the paper proceeds as follows. 
In Section \ref{sec:Main} we begin by proving how solutions may be solved by quadrature in  Theorem \ref{prop:Intro_Integrability}, and then work towards proving the existence of (quasi)periodic orbits in  Theorem \ref{prop:Intro_Quasiperiodic}. 
To do so, first we derive a recursive relationship for the space-time Fourier coefficients of a solution in Lemma \ref{prop:FF_Recursive}. 
We then equate solutions to this recursive relationship to fixed points of an operator $T$ given in Definition \ref{def:Newton_Map}. 
To prove Theorem \ref{prop:Intro_Quasiperiodic}, we then show that $T$ has a fixed point in a suitably chosen ball using the Schauder fixed point theorem.

Then in Section \ref{sec:Monochromatic} we focus on proving Theorem \ref{prop:Intro_Blowup}. 
First we show in Theorem \ref{prop:Rescaling} that solutions of \eqref{eq:NLS} with monochromatic initial data satisfy a geometric scaling in their Fourier coefficients. 
We then prove    part (b) of Theorem \ref{prop:Intro_Blowup} that sufficiently large monochromatic initial data will blowup in the $L^2$ norm. 
To  prove part (a)  we use a computer assisted proof to explicitly compute the coefficients up to a certain order, and then show that the operator $T$ has a fixed point in a ball about this truncated solution.

\section{Integrability and Quasiperiodicity}
\label{sec:Main}

In this paper, we study initial data to equation \eqref{eq:NLS} with non-negative Fourier modes. 
We begin by reviewing some algebraic properties of $\N^d$, where  $ \N$ denote the set of non-negative integers. 
Firstly the set $ \N^d$ forms a commutative semi-ring, which is to say it inherits the algebraic operations of elementwise addition and multiplication from $ \Z^d$,  however $\N^d$ lacks additive inverses. 
The space $\N^d$ may also be endowed with the structure of a semi-module, which is to say that if $ r \in \N$ and $n = ( n_1, \dots ,n_d) \in \N^d$ then we define $ r n = ( r n_1, \dots , r n_d)$. 
Furthermore, to clarify a slight abuse of notation, for $ r \in \N$ and $n \in \N^d$ we define $ r+n = r 1_{\N^d} + n = (r+n_1 , \dots ,r+ n_d)$ where $1_{\N^d} = (1 , \dots ,1)$ denotes the multiplicative identity on $\N^d$. 

We define the Banach space 
\begin{align*}
\ell^1_{s,d} &:= \left\{ \{ a_n \}_{n\in\N^d}: a_n \in \C,  \|a\|_{\ell^1_s} < \infty   \right\} ,&
\|a\| := \sum_{n\in \N^d} 	(1+|n|)^s
|a_n |,
\end{align*}
 and   the Cauchy product  of two elements $ a , b \in \ell^1_{s,d}$ by 
\begin{align*}
(a * b)_n &:= \sum_{\substack{n_1 + n_2 = n  \\ n_1,n_2 \in \N^d}} a_{n_1} b_{n_2} .
\end{align*}
Note that for each fixed $ n \in \N^d$ the sum above is finite, and the Cauchy product here is naturally identified the multiplication of two power series in $d$ variables.
The $d$ variables we consider here are $ e^{i \omega_k  x_k}$ for $ 1 \leq k \leq d$.

The sequence space $ \ell^1_{s,d}$ is isometrically isomorphic to $ A_s^+(\T^d)$. 
In this manner the Cauchy product of two elements in  $ \ell^1_{s,d}$ corresponds with the multiplication of their respective functions $ A_s^+(\T^d)$.  
Furthermore $\ell^1_{s,d}$ is a commutative Banach algebra;  if $a,b\in \ell^1$, then $ \| a * b\| \leq \| a\| \| b\|$. 
For notational brevity  we define $ a^p = a * \dots *a$. 	
Also note that the function $F(a) = a^p$ is Fr\'echet differentiable and $ DF(a)h = p a^{p-1} * h$.

Below we show that if $ u_0 \in A^+_0(\T^d)$ then  each of the Fourier modes $a_n(t)$ of its solution can be explicitly solved  by quadrature.

\begin{proof}[Proof of Theorem  \ref{prop:Intro_Integrability}]
	Fix initial data $ u_0 \in A^+_s(\T^d)$ and define $ \phi_n \in \ell_{s,d}^1$ by $ \phi_n = \hat{u}_0(n)$. 
	Let  $u(t)$  denote the solution to \eqref{eq:NLS} with $ u(0) = u_0$, and $J\subseteq \R$   the maximal interval of existence. 
	As $ A_0^+(\T^d)$ and $ \ell_{0,d}^1$ are isometrically isomorphic, there exists a corresponding function  $ a: J \to \ell^1_{s,d}$ such that  
	\[
	u(t) = \sum_{n \in \N^d} a_n(t) e^{i \omega n x}
	\]
	By expanding the PDE \eqref{eq:NLS} in Fourier modes and matching terms, we see that $a$ must satisfy the following densely defined differential equation, defined in each component as: 
	\begin{align}
	\label{eq:FourierEquation}
	\dot{a}_n =  i \omega^2 n^2 a_n - i (a^p)_n .
	\end{align}
	Note that the zero mode of  \eqref{eq:FourierEquation} becomes as written below  
		\begin{align*}
		\dot{a}_0  &= 	-i a_0^p
	\end{align*}
	which has the following explicit solution  
\begin{align}\label{eq:ZeroModeSolution}
	a_0(t) := \frac{\phi_0}{(1+ i(p-1) \phi_0^{p-1} t)^{1/(p-1)}}  
\end{align}
where one takes the branch $ 1^{1/(p-1)} = 1$ so that $ a_0(0) = \phi_0$.

We will prove by induction that each function $a_n(t)$ can be solved by quadrature. 
Fix $n \in \N^d$ with $|n| \geq 1$ and assume that for each $ k \in \N^d$ with  $ 0 \leq |k|  \leq |n|-1$ that $a_k(t)$ may be solved for by quadrature. The differential equation \eqref{eq:FourierEquation}   is linear in $a_n$, so to solve the equation, we define integrating factors 
\begin{align*}
	P_n(t) &=  i \omega^2 n^2 -  i p (a_0(t))^{p-1}  , & 
	Q_n(t) &= -i\big(
	(a(t)^p)_n -  p a_0(t)^{p-1} a_n(t)   \big)\\
	&&
	&=- i 
	\sum_{
		\substack{n_1 + \dots + n_p = n  \\
			0 \leq 	|n_1|,\dots,|n_p| < |n|
	}}
	a_{n_1}(t)  a_{n_2}(t) \cdots 	 a_{n_p}(t) .
\end{align*}
As defined, $Q_n(t)$ depends only on the Fourier modes $a_k(t)$ for $ |k| < |n|$ where each $a_k(t)$ may be solved for explicitly by quadrature.  
Substituting $P_n$ and $Q_n$ back into  \eqref{eq:FourierEquation}  yields  the following non-homogeneous linear first order differential equation  
\[
\dot{a}_n =  P_n(t) a_n + Q_n(t).
\]
	Then $a_n(t)$ can be explicitly solved for by quadrature as follows: 
	\begin{align}
		\label{eq:ExplicitFormula}
		a_n(t) = 
		\phi_n \exp \left\{  \int_0^t P_n(s) ds \right\} 
		+
		\exp \left\{  \int_0^t P_n(s) ds \right\} 
		\int_0^t \exp \left\{ - \int_0^s P_n(\tau) d \tau \right\} 
		Q_n(s)  ds .
	\end{align}

\end{proof}

To reiterate, while the expression $(a(t)^p)_n$ in the definition of $Q_n$ may appear to depend on the entire sequence $\{ a_n(s)\}_{n \in \N^d} $, it in fact only depends on components $a_k(s)$ with $ | k | \leq |n|$.
In the definition of $Q_n$ we subtract away the dependence on $a_n(t)$, thereby $Q_n$ only depends on the solutions  $a_k(t)$ for $ |k| < |n|$. 
For instance, in Example \ref{eg:Recursive} we  have  $ Q_n(t) = -i  \sum_{k=1}^{n-1} a_{k}(s) a_{n-k}(s)$.
In this manner the functions $a_n(t)$ may be recursively solved for using the formula in \eqref{eq:ExplicitFormula} for increasing orders of $|n|$. 

As a further remark, note that 	\eqref{eq:ExplicitFormula} is essentially  the Duhamel's formula version of the differential equation in \eqref{eq:FourierEquation}. 
If  $\phi_0 = 0$ we obtain a significant simplification.
\begin{corollary}\label{thm:ExplicitSolution}
		Fix initial data $ u_0 \in A_s^+(\T^d)$ and suppose $u(t)$ solves \eqref{eq:NLS}  with initial data $ u(0,x) = u_0(x)= \sum_{n \in \N^d}^{\infty}  \phi_n e^{ i \omega n x}$ and suppose  $ \phi_0=0$. 
	Let $ J = (-T_{min},T_{max})$ denote the maximal interval of existence and define  a function $ a: J \to \ell_{s,d}^1$ such that
	\[
	u(t,x) = \sum_{n\in \N^d} a_n(t) e^{i \omega n x}
	\]
	for all $ t \in J$. 
	The $a_n(t)$ are given by the recursive formula : 
	\begin{align} \label{eq:Bdef}
	a_0(t) &= 0,\\
	a_n(t) &= \phi_n e^{i \omega^2 n^2 t}  -  i e^{i \omega^2 n^2 t} 
	\int_0^t  e^{-i \omega^2 n^2 s}  \left(
  a^p(s)  
	\right)_n  ds, 
	\label{eq:BdefIndunction}
	\end{align}
where  we define  $ \omega^2 j = \sum_{i=1}^d \omega_i^2 j_i  \in \R$ for $ \omega \in \R^d$ and $ j \in \N^d$.  

\end{corollary}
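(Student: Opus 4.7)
The plan is to deduce Corollary \ref{thm:ExplicitSolution} as a direct specialization of Theorem \ref{prop:Intro_Integrability}, using the vanishing of $\phi_0$ to collapse the integrating factors appearing in \eqref{eq:ExplicitFormula}.

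First I would dispose of the zeroth mode. The explicit formula \eqref{eq:ZeroModeSolution} derived in the proof of Theorem \ref{prop:Intro_Integrability} gives $a_0(t) = \phi_0 / (1 + i(p-1)\phi_0^{p-1}t)^{1/(p-1)}$, which vanishes identically when $\phi_0 = 0$. Equivalently, the constant function $0$ is the unique solution of the scalar ODE $\dot{a}_0 = -i a_0^p$ with initial condition $a_0(0)=0$. Either way, $a_0(t) \equiv 0$ on $J$, establishing \eqref{eq:Bdef}.

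Next I would simplify the integrating factors from the proof of Theorem \ref{prop:Intro_Integrability}. Since $p \geq 2$ and $a_0(t) \equiv 0$, we have $a_0(t)^{p-1} = 0$, so
\begin{align*}
P_n(t) &= i\omega^2 n^2 - i p\, a_0(t)^{p-1} = i\omega^2 n^2, \\
Q_n(t) &= -i\bigl((a(t)^p)_n - p\, a_0(t)^{p-1} a_n(t)\bigr) = -i\, (a(t)^p)_n.
\end{align*}
Substituting into \eqref{eq:ExplicitFormula}, the $s$-integrals of $P_n$ are elementary, and the formula collapses to
\begin{equation*}
a_n(t) = \phi_n e^{i\omega^2 n^2 t} - i e^{i\omega^2 n^2 t} \int_0^t e^{-i\omega^2 n^2 s}\, (a(s)^p)_n\, ds,
\end{equation*}
which is precisely \eqref{eq:BdefIndunction}.

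Finally, I would note why the recursion is well-posed even though $(a(s)^p)_n$ has not been purged of the $a_n$ contribution as in the general proof. Expanding $(a^p)_n = \sum_{n_1+\cdots+n_p = n} a_{n_1}\cdots a_{n_p}$ with $n_i \in \N^d$, any multi-index tuple with some $n_i = 0$ contributes zero because $a_0 \equiv 0$. The surviving tuples satisfy $|n_i| \geq 1$ for every $i$, and since $p \geq 2$ the constraint $\sum_i |n_i| = |n|$ forces $|n_i| \leq |n| - (p-1) \leq |n| - 1$. Hence $(a(s)^p)_n$ depends only on $a_k(s)$ with $|k| < |n|$, so \eqref{eq:BdefIndunction} defines $a_n(t)$ by induction on $|n|$. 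I do not expect any obstacle beyond bookkeeping; the only point that warrants explicit comment is this cancellation of the $a_n$-linear term, which is what makes the simplified recursion honest.
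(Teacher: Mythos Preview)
Your proposal is correct and follows essentially the same route as the paper: set $\phi_0=0$ so that $a_0\equiv 0$, which reduces $P_n(t)$ to $i\omega^2 n^2$ and $Q_n(t)$ to $-i(a^p)_n$, and then substitute into \eqref{eq:ExplicitFormula}. Your additional paragraph verifying that $(a^p)_n$ depends only on modes $a_k$ with $|k|<|n|$ is a helpful elaboration that the paper leaves to the surrounding discussion rather than including in the proof itself.
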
  
\begin{proof}
	If $\phi_0 = 0$, then $a_0(t)=0$, $P_n(t) = i \omega^2 n^2$ and $ Q_n(t) = -i (a^p)_n$.  The corollary follows by simplifying the expression in \eqref{eq:ExplicitFormula}.  
\end{proof}

The computation from Example \ref{eg:ExplicitFunctions} in the case $(p,d)=(2,1)$ suggests that small monochromatic initial data will yield periodic solutions with a fixed frequency of $ \omega^2/2\pi$.  
To study such functions we define the space $\ell^1_{s,d} \otimes \ell^1_{0,d}$ of space-time Fourier coefficients; for a sequence $ c \in \ell^1_{s,d} \otimes \ell^1_{0,d}$ we   associate it with a (quasi)periodic function $u \in C^0( \R , A_s^+(\T^d))$ given as  
\begin{align}\label{eq:FF_function_correspondance}
	u(t,x) &= \sum_{n,j \in \N^d}
	c_{n,j} e^{i \omega^2 j t} e^{i n \omega  x} .
\end{align}
We define a norm on $ \ell^1_{s,d} \otimes \ell^1_{0,d}$ as  
\[
\|a \| 
 = \sum_{n,j \in \N^d }  (1+|n|)^s | a_{n,j}| .
\]
Note that if $ s=0$, then $ \ell^1_{s,d} \otimes \ell^1_{0,d}$ is isometrically isomorphic to $\ell^1_{0,2d}$. 
Again we may define the Cauchy  product, corresponding with the multiplication of two functions in  $C^0( \R , A_s^+(\T^d))$, 
for  two elements $a ,  b \in \ell^1_{s,d} \otimes \ell^1_{0,d}$  as follows  
	\[
	(a * b)_{n,j} := \sum_{\substack{n_1 + n_2 = n  \\ n_1,n_2 \in \N^d}}
	\sum_{\substack{j_1 + j_2 = j  \\ j_1,j_2 \in \N^d}}
	a_{n_1,j_1} b_{n_2,j_2} .
	\]
	This space is also a Banach algebra;  for $ a,b \in \ell^1_{s,d} \otimes \ell^1_{0,d}$ we have  
	$ \| a * b \| \leq \| a  \| \, \| b \| $. 	 
	
	One may further note from  Example \ref{eg:ExplicitFunctions} that $a_n(t)$ contains Fourier modes $ e^{i \omega^2 j t}$ only for $ n \leq j \leq n^2$. 
	This example was only for the one dimensional case $ x \in \T^1$.  
	 To make sense of an expression $n \leq j \leq n^2$ for elements $ n,j \in \N^d$, we  introduce a partial order  on $ \N^d$. 
	\begin{definition}
Define a partial order $\leq$ and a strict partial order $<$ on $\N^d$ as follows. 	
		\begin{itemize}
			\item  Two elements $ m,n \in \N^d$ satisfy $m \leq n$ if and only if $ m_k \leq n_k$ for all $1 \leq k \leq d$. 
			\item Two elements $ m,n \in \N^d$ satisfy $ m < n $ if and only if $m_k < n_k$ for all $1 \leq k \leq d$.  
		\end{itemize}
	\end{definition} 

 These partial orders are respected under addition and multiplication of elements in $\N^d$. That is for elements $a,b,c,d \in \N^d$, if $ a \leq b$ and $c \leq d $, then $ a + c \leq b + d$ and $ a \cdot c \leq b \cdot d$. 
 Furthermore we note that if $ m \leq n $ for elements $ m,n \in \N^d$ then $ n - m \in \N^d$.   
 We are now prepared to define a subspace of  $\ell^1_{s,d} \otimes \ell^1_{0,d}$ which generalizes the type of Fourier series seen in Example \ref{eg:ExplicitFunctions}. 
\begin{definition}
	Define the Banach space 
	\[
	X := \left\{ 
	c \in \ell^1_{s,d} \otimes \ell^1_{0,d} :   c_{n,j} \neq  0 \mbox{ only if } 1_{\N^d} \leq n \mbox{ and } n \leq j \leq n^2
	\right\}
	\]

\end{definition}
The space $X$ inherits its norm and Banach algebra structure from $ \ell^1_{s,d} \otimes \ell^1_{0,d}$, and the following lemma shows that  $*:X \times X \to X$ is a closed operation. 
 
\begin{lemma}\label{prop:Xclosed}
	Suppose  $ c \in X$ and $ n,j \in \N^d$. Then $ (c^p)_{n,j} \neq 0  $ only if $p \leq n$ and $ n \leq j \leq n^2 - (p-1)(2n-p)$. 
\end{lemma}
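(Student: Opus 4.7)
The plan is to expand the $p$-fold Cauchy product and track the constraints imposed by membership in $X$ componentwise in each of the $d$ coordinates. By definition,
\[
(c^p)_{n,j} = \sum_{\substack{n_1+\cdots+n_p = n \\ j_1+\cdots+j_p = j}} c_{n_1,j_1}\, c_{n_2,j_2}\cdots c_{n_p,j_p},
\]
and only tuples $((n_1,j_1),\ldots,(n_p,j_p))$ with every factor nonzero can contribute. The definition of $X$ then forces $1_{\N^d} \leq n_k$ and $n_k \leq j_k \leq n_k^2$ for each $k=1,\ldots,p$. Summing the first inequality over $k$ gives $n \geq p \cdot 1_{\N^d}$, which is $p \leq n$, and summing $n_k \leq j_k$ gives $n \leq j$.

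For the nontrivial upper bound on $j$, I would sum $j_k \leq n_k^2$ to reduce the problem to establishing, componentwise, the elementary inequality
\[
\sum_{k=1}^p m_k^2 \;\leq\; \Bigl(\sum_{k=1}^p m_k\Bigr)^{\!2} - (p-1)\Bigl(2\!\sum_{k=1}^p m_k - p\Bigr)
\]
whenever $m_1,\ldots,m_p$ are positive integers. I would prove this by a smoothing argument using the two-variable identity $(a+b-1)^2 + 1^2 - a^2 - b^2 = 2(a-1)(b-1) \geq 0$ for $a,b \geq 1$. Iteratively replacing any pair $(m_i,m_k)$ with both $m_i, m_k \geq 2$ by $(m_i + m_k - 1, 1)$ strictly increases $\sum m_k^2$ while preserving the total sum, so the maximum is attained at the extremal configuration where all but one $m_k$ equal $1$ and one equals $N - (p-1)$, where $N = \sum m_k$. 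A direct computation then gives
\[
\bigl(N-(p-1)\bigr)^2 + (p-1) \;=\; N^2 - (p-1)(2N-p),
\]
establishing the inequality. Applying this in each coordinate with $N = n_i$ and using componentwise monotonicity of the partial order, I obtain $j \leq n^2 - (p-1)(2n-p)$.

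I do not expect a genuine obstacle here: the first two bounds are immediate from $X$, and the upper bound on $j$ reduces to a convex optimization on the discrete simplex $\{m \in \N^p : \sum m_k = n_i,\, m_k \geq 1\}$, which the smoothing lemma handles cleanly. The only mild subtlety is being careful that all inequalities $n \geq p$, $n \leq j$, and $j \leq n^2 - (p-1)(2n-p)$ are interpreted in the componentwise partial order on $\N^d$, which is preserved by the coordinatewise addition and squaring that arises from the Cauchy product.
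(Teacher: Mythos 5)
Your proof is correct, and its overall structure matches the paper's: expand the $p$-fold Cauchy product, sum the constraints $1_{\N^d}\leq n_k$ and $n_k\leq j_k\leq n_k^2$ to get $p\leq n$ and $n\leq j$, then reduce the upper bound on $j$ to a componentwise scalar maximization of $\sum_k m_k^2$ over $m_1+\cdots+m_p=m$ with $m_k\geq 1$. The only genuine difference is how that extremal problem is solved. The paper proceeds by induction on $p$: it writes the maximum as a nested maximization over $m_1$, invokes the inductive formula for $J^+_{p-1}(m-m_1)$, and checks by an explicit completing-the-square computation that the inner expression is maximized at $m_1=1$ or $m_1=m-(p-1)$, yielding $J^+_p(m)=m^2-(p-1)(2m-p)$. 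You instead use a smoothing (exchange) argument: the identity $(a+b-1)^2+1-a^2-b^2=2(a-1)(b-1)>0$ for $a,b\geq 2$ shows any configuration with two entries $\geq 2$ is not maximal, so the maximum is attained at the configuration with $p-1$ ones and one entry $m-(p-1)$, and a direct computation gives the same closed form. Your route avoids the bookkeeping of the inductive formula and makes it transparent where the extremum sits; the paper's induction is more mechanical but produces the explicit value of $J^+_p$ at every step, which it then cites directly. Both arguments correctly keep all inequalities in the componentwise partial order on $\N^d$, which is compatible with the coordinatewise addition and squaring coming from the Cauchy product, so your proof is a valid (and slightly more elementary) substitute for the paper's.
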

\begin{proof}
	Fix $ n , j \in \N^d$. 
	The product $ c^p$ may be expressed as  
	\[
	(c^p)_{n,j} = 
	\sum_{n_1 + \dots + n_p=n} 
	\sum_{\substack{ 
		j_1 + \dots + j_p =j \\
		n_{i} \leq j_i \leq n_{i}^2 \\
		1 \leq i \leq p
	}   }
	c_{n_1,j_1} \dots c_{n_p , j_p}
	,
	\qquad \qquad 
	n_1, \dots n_p,j_1 , \dots j_p \in \N^d .
	\]
	
	As $c \in X$, then  we have $ \prod_{1\leq i\leq d} c_{n_i,j_i} \neq 0$ only if $ 1_{\N^d} \leq n_i$ for all $1 \leq i \leq p$. 
	The partial order on $\N^d$ respects addition, so if $ n_i \geq 1_{\N^d} $ for each $1 \leq i \leq p$   then  $n = n_1 + \dots + n_p \geq p 1_{\N^d}$.  
	Hence $(c^p)_{n,j} \neq 0$ only if $ n \geq p 1_{\N^d} $. 
	
	We next prove the lower bound that $ (c^p)_{n,j} \neq 0 $ only if $ n \leq j$.  
	Let us fix partitions $ n_1 + \dots + n_p = n$ and $j_1 + \dots + j_p =j$ and suppose that $ 	c_{n_1,j_1} \dots c_{n_p , j_p} \neq 0$. 
	As $c \in X$, then $ n_i \leq j_i \leq n_i^2$ for all $ 1 \leq i \leq p$.  
	Again, since the partial order $\leq $ on $\N^d$ respects addition, it follows that 
	\[
	n = n_1 + \dots + n_p \leq j_1 + \dots + j_p = j .
	\]
	 Hence $	c_{n_1,j_1} \dots c_{n_p , j_p} \neq 0 $, and moreover $ (c^p)_{n,j} \neq 0$,  only if $n \leq j$.  
	
	To prove the upper bound, again 	let us fix partitions $ n_1 + \dots + n_p = n$ and $j_1 + \dots + j_p =j$ and suppose that $ 	c_{n_1,j_1} \dots c_{n_p , j_p} \neq 0$. 
	As $c_{n_i,j_i} \in X$ for all $ 1 \leq i \leq p$ then 
\begin{align}
		j &= j_1 + \dots + j_p \leq n_1^2 + \dots + n_p^2 . \label{eq:XclosedUpperBound}
\end{align} 
	To proceed, we will focus on one component $(n)_k$ of $n \in \N^d$ for  fixed $1 \leq k \leq d$; define $ m = (n)_k$ and $ m_i = (n_i)_k$ for each $1 \leq i \leq p$. 
	Then the  inequality in \eqref{eq:XclosedUpperBound} becomes 
\begin{align} \label{eq:XclosedUpperBoundReduced}
		(j)_k  &\leq m_1^2 + \dots + m_p^2 ,
\end{align}
	where $ m_1 + \dots + m_p = m \in \N$ and $ 1 \leq m_i$ for all $ 1 \leq i \leq p$.     
	
	 To define an upper bound on \eqref{eq:XclosedUpperBoundReduced} for all possible summations, we define   $J^+_p: \N\to \N$  below
	\[
	J^+_p(m) := \max 
	\left\{
	m_1^2  + \dots + m_p^2  \in \N
	\, \Big| 
	m_1 + \dots + m_p = m  \mbox{ and } m_i \geq 1   \mbox{ for } 1 \leq i \leq p
	\right\} .
	\]
	We show by induction on $p$ that
	\begin{align}
J^+_p(m) &= \underbrace{1^2+ \cdots  +1^2}_{p-1 \text{ times} }
+ (m-(p-1))^2
 \nonumber \\
&=m^2 - (p-1)(2m-p) .\label{eq:JplusDef}
	\end{align} 
	If $ p =1$ then $J^+(m) = m^2$, hence the base case is satisfied.  
	We make the inductive assumption that $J^+_q(m)$ satisfies \eqref{eq:JplusDef} for all $ q < p$ and all $ m \in \N$. 
	We may then compute  
\begin{align}
	J_p^+(m) &= \max_{ \substack{m_1 + \dots + m_p =m\\ 1 \leq m_1,\dots , m_p \leq m }} m_1^2 + \dots + m_p^2 
	\nonumber
	\\
		&= 
	\max_{1\leq m_1 \leq m-(p-1) }
	\left( m_1^2 + 
	\max_{\substack{ m_2 + \dots +m_p = m-m_1  \\ 1 \leq m_2 , \dots , m_p \leq m-m_1}}
	m_2^2 + \dots + m_p^2 \right)
	\nonumber
	\\
	&= 
	\max_{1\leq m_1 \leq m-(p-1) }
	\left( m_1^2 + 
	J_{p-1}^+(m-m_1) \right)
	\nonumber
	\\
	&= 
	\max_{1\leq m_1 \leq m-(p-1) } m_1^2 + \left( (m-m_1)^2 -(p-2)( 2(m-m_1) -(p-1)) \right) 
	\label{eq:Xalgebra1}
	\\
	&= \left(
	m^2-(p-1)(2m-p)
	\right) 
	+
	\max_{1\leq m_1 \leq m-(p-1) } 
	-2 (m_1-1)\left( 
	m-(p-1)-m_1
	 \right) .
	 	\label{eq:Xalgebra2}
\end{align}
Here \eqref{eq:Xalgebra1} was obtained by the inductive step,  and \eqref{eq:Xalgebra2} was obtained by adding and subtracting the expression in \eqref{eq:Xalgebra1} when $m_1 = 1$. 
The quantity inside the maximum in \eqref{eq:Xalgebra2} is non-positive for $ 1 \leq m_1 \leq m-(p-1)$, and is maximized at the value $0$ when either $m_1 = 1 $ or $m_1 = m-(p-1)$. Thus, the inductive step is completed.

As inequality \eqref{eq:XclosedUpperBoundReduced} must be satisfied, then $(j)_k \leq J_p^+(m) = m^2 - (p-1)(2m-p)$ where $m = (n)_k$. 
Hence $	c_{n_1,j_1} \dots c_{n_p , j_p} \neq 0 $, and moreover $ (c^p)_{n,j} \neq 0$,  only if $j \leq n^2 - (p-1)(2n-p)$. 
The lemma follows.

\end{proof}

 Note that  Lemma \ref{prop:Xclosed} only dealt with finite sums and did not require any convergence properties. We may define the following space. 
  \begin{definition}
 	Define the vector algebra  
 	\[
 	\cX := \left\{ 
 	c \in \C^{\N^d} \otimes \C^{\N^d} : c_{n,j} \neq  0 \mbox{ only if } 1_{\N^d} \leq n \mbox{ and }  n \leq j \leq n^2
 	\right\}
 	\] 
 \end{definition}
Then Lemma \ref{prop:Xclosed} follows when we replace $ X $ by $ \cX$. Hence it follows that $ *: \cX \times \cX\to \cX$  is a well defined operation.   
We now present a refinement of Corollary \ref{thm:ExplicitSolution} for initial data on strictly positive Fourier modes, showing that the functions  $a_n(t)$ are given as a Fourier sum.

\begin{lemma}  \label{prop:FF_Recursive}
	Take the same hypothesis as in Corollary \ref{thm:ExplicitSolution}, having fixed some initial data $ \phi \in \ell^1_{s,d}$. In addition, assume that   $\phi_{n} \neq  0 $ only if $ 1_{\N^d} \leq n$.  Then there exist unique Fourier coefficients $c \in \cX$ for which    the functions $a_n$  are given by:
	\begin{align} \label{eq:PeriodicCoefficients}
	a_n(t) 
	&=
	\sum_{n \leq j \leq n^2} 
	c_{n,j} e^{i \omega^2 j t},
	\end{align}
	where the Fourier coefficients $ c_{n,j}$ may be recursively defined as $c_{0,j} = 0$ for all $ j \in \N^d$, and 
\begin{align} 
	c_{n,j} &= 
	\begin{dcases} 
		\frac{(c^p)_{n,j} }{		\omega^{2}(n^2-j) \;\;}
		& \mbox{ if } n\leq j<n^2  \\
		\phi_n -
		\sum_{n \leq k < n^2} \frac{(c^p)_{n,k} }{		\omega^{2}(n^2-k) \;\;}
		& \mbox{ if } j = n^2 ,
	\end{dcases}
\label{eq:FF_Recursive}
\end{align}
where  we define  $ \omega^2 j = \sum_{i=1}^d \omega_i^2 j_i  \in \R$  for $ \omega \in \R^d$ and $ j \in \N^d$. 

\end{lemma}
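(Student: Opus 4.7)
The plan is to prove the claim by strong induction on $|n|$, using the explicit quadrature expression from Corollary \ref{thm:ExplicitSolution} as the starting point. The key auxiliary fact is Lemma \ref{prop:Xclosed}, which tells us that if $c \in \cX$ then $(c^p)_{n,j}$ is nonzero only when $n \geq p\, 1_{\N^d}$ and $n \leq j \leq n^2 - (p-1)(2n-p)$. For $p \geq 2$ and $n \geq p\, 1_{\N^d}$ the quantity $(p-1)(2n-p)$ is strictly positive in every coordinate; this is exactly the condition that will prevent secular terms from appearing under integration.

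For the base case I would take any $n \geq 1_{\N^d}$ with $n \not\geq p\, 1_{\N^d}$. In this situation Lemma \ref{prop:Xclosed} forces $(a^p(t))_n \equiv 0$, so Corollary \ref{thm:ExplicitSolution} immediately gives $a_n(t) = \phi_n e^{i\omega^2 n^2 t}$. This matches the ansatz \eqref{eq:PeriodicCoefficients} with $c_{n,n^2} = \phi_n$ and $c_{n,j} = 0$ otherwise, and is consistent with the recursion \eqref{eq:FF_Recursive}, since $(c^p)_{n,j} = 0$ for all $j$ in this regime.

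For the inductive step, assume \eqref{eq:PeriodicCoefficients} holds for every $k$ with $|k| < |n|$, and substitute this expression into the Cauchy product $(a^p(t))_n$. Only modes strictly smaller than $n$ appear (each factor must have $n_i \geq 1_{\N^d}$ and sum to $n$, which for $p \geq 2$ forces $|n_i| < |n|$), so the induction hypothesis applies and yields
\begin{equation*}
(a^p(t))_n = \sum_{n \leq j \leq n^2 - (p-1)(2n-p)} (c^p)_{n,j}\, e^{i\omega^2 j t}.
\end{equation*}
Plugging into the quadrature formula from Corollary \ref{thm:ExplicitSolution} and using the elementary evaluation $\int_0^t e^{i\omega^2(j-n^2)s}\, ds = (e^{i\omega^2(j-n^2)t} - 1)/(i\omega^2(j-n^2))$, which is valid because $j_i < (n^2)_i$ in at least one component forces $\omega^2(j-n^2) \neq 0$, one obtains
\begin{equation*}
a_n(t) = \phi_n e^{i\omega^2 n^2 t} + \sum_{n \leq j < n^2} \frac{(c^p)_{n,j}}{\omega^2(n^2 - j)}\bigl(e^{i\omega^2 j t} - e^{i\omega^2 n^2 t}\bigr).
\end{equation*}
Collecting by frequency $e^{i\omega^2 j t}$ then reproduces \eqref{eq:FF_Recursive} exactly: for $n \leq j < n^2$ the coefficient equals $(c^p)_{n,j}/(\omega^2(n^2-j))$, while the coefficient of $e^{i\omega^2 n^2 t}$ equals $\phi_n$ minus the sum of the other coefficients.

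Uniqueness of $c \in \cX$ is automatic, because \eqref{eq:FF_Recursive} determines each $c_{n,j}$ from coefficients at strictly smaller indices in the induction hierarchy. The main delicate point I expect to watch carefully is the secular-term bookkeeping: one must verify that the resonant frequency $j = n^2$ arises only from the homogeneous term $\phi_n e^{i\omega^2 n^2 t}$ and from evaluating the antiderivative at $s = 0$, never from an actual resonance inside the sum for $(a^p)_n$. This is exactly where Lemma \ref{prop:Xclosed}'s strict componentwise bound $j \leq n^2 - (p-1)(2n-p)$ with $(p-1)(2n-p) > 0$ is indispensable; everything else is a bookkeeping exercise in matching Fourier coefficients.
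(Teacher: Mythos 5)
Your proposal follows essentially the same route as the paper's proof: induct on $|n|$, insert the ansatz \eqref{eq:PeriodicCoefficients} into the quadrature/Duhamel formula of Corollary \ref{thm:ExplicitSolution}, invoke Lemma \ref{prop:Xclosed} to see that every frequency $j$ appearing in $(a^p)_n$ satisfies $j \leq n^2-(p-1)(2n-p)$, so that $\omega^2(n^2-j)\neq 0$ and the termwise integration produces no secular contribution, then collect the $e^{i\omega^2 j t}$ and $e^{i\omega^2 n^2 t}$ terms to read off \eqref{eq:FF_Recursive}; uniqueness from the recursion is immediate, exactly as in the paper.

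There is, however, one omission for $d\geq 2$ that is quietly used in both your base case and your inductive step. The conclusion $c\in\cX$ also asserts that $a_n\equiv 0$ for every $n\neq 0$ having some vanishing component (for instance $n=(0,3)$ when $d=2$), and your argument never establishes this: you only treat $n\geq 1_{\N^d}$. Yet when you claim ``Lemma \ref{prop:Xclosed} forces $(a^p(t))_n\equiv 0$'' in the base case, and ``each factor must have $n_i\geq 1_{\N^d}$'' in the inductive step, you are assuming that the lower-order modes indexed outside $\{k\geq 1_{\N^d}\}$ vanish, which is part of the conclusion, not a hypothesis. The repair is short and should be folded into the strong induction: if $k\not\geq 1_{\N^d}$, say its $m$-th component is zero, then $\phi_k=0$ by assumption, and every term of $(a^p)_k$ runs over a partition $\kappa_1+\dots+\kappa_p=k$ in which each $\kappa_i$ also has $m$-th component zero; each such term contains either a factor $a_0=0$ or only factors with $|\kappa_i|<|k|$, which vanish by the induction hypothesis, so $a_k\equiv 0$. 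With this clause added, the induction hypothesis legitimately restricts the Cauchy product to factors with $\kappa_i\geq 1_{\N^d}$ and the rest of your argument goes through as written; for $d=1$ there is no issue, since the only excluded index is $n=0$, already handled by Corollary \ref{thm:ExplicitSolution}.
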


We remark that for all $n,j\in \N^d$ that the product  $(c^p)_{n,j}$ only depends on coefficients $c_{m,k}$ for $|m| < |n|$.  Moreover $(c^p)_{n,j}$   only depends on coefficients $ c_{m,k}$ for which $ m \leq n - (p-1) < n$. 
Thus  we can solve $c_{n,j}$ in  increasing orders of $|n|$.

\begin{proof}
	From Corollary \ref{thm:ExplicitSolution} we have 
	$a_0(t) = 0$ and furthermore $ a_n(t) =0$ for all $ |n| < d$. 
	Hence $ c_{n,j} =0$ for all $|n|<d$. 
	
	We prove the rest by induction, assuming $a_m(t)$ has the form in \eqref{eq:PeriodicCoefficients} for all $ m < n$. 
	By Lemma \ref{prop:Xclosed}  it   follows that  
	\[
(	a^p(s))_n =
	\sum_{ 
		n \leq j \leq n^2 - (p-1)(2n-p)
	}
	(c^p)_{n,j} e^{i \omega^2 j s} .
	\]
Again 	by Lemma \ref{prop:Xclosed} we have $ (a(s)^p)_n  \neq 0$ only if $ n \geq p$. 
By plugging the above expression  into equation  \eqref{eq:BdefIndunction} we obtain  
	\begin{align*}
	a_n(t) &= \phi_n e^{i \omega^2 n^2 t}  - i e^{i \omega^2 n^2 t} 
	\int_0^t 
	e^{-i \omega^2 n^2 s}
	\left(
	\sum_{ 
		 n \leq j \leq n^2 - (p-1)(2n-p)
	}
	(c^p)_{n,j} e^{i \omega^2 j s}
	\right)
	ds 
	\nonumber
	\\
	&= 
	\phi_n e^{i \omega^2 n^2 t}  - i e^{i \omega^2 n^2 t} 
		\sum_{ 
		n \leq j \leq n^2 - (p-1)(2n-p)
	}
	\int_0^t 
	(c^p)_{n,j} e^{-i \omega^2 (n^2-j) s}
	ds 
\\
		&= 
	\phi_n e^{i \omega^2 n^2 t}  - i e^{i \omega^2 n^2 t} 
	\sum_{ 
		n \leq j \leq n^2 - (p-1)(2n-p)
	}
(c^p)_{n,j}
\frac{	 e^{-i \omega^2 (n^2-j) s} -1 }{- i \omega^2 (n^2-j)}	
\\
	&= 
	\phi_n e^{i \omega^2 n^2 t}  + 
	\sum_{ 
		n \leq j \leq n^2 - (p-1)(2n-p)
	}
	\frac{	 	(c^p)_{n,j} }{ \omega^2 (n^2-j)}	
	\left(
	e^{i \omega^2 j t} - e^{i \omega^2 n^2 t} 
	\right)
	\label{eq:Bintegral}
	\end{align*} 
The formula in \eqref{eq:FF_Recursive}  and the inclusion $c \in \cX$ follow. 
	
\end{proof}

With this recursive definition, we can compute the Fourier coefficients of a solution to any order.
However it is not clear whether the solution is bounded. 
To show this series  converges, we wish to show that the unique $ c \in \cX$ solving \eqref{eq:FF_Recursive} in fact  has a bounded norm in $ X$. We define an operator $ T: X \to X$ whose fixed points correspond to sequences satisfying the recursive relation. 
We will then show for sufficiently small $\phi$ the operator $T$ has a fixed point. 

 To that end, for fixed $ \omega \in \R^d$  define bounded linear operators $K, L : X \to X $ and $ \iota:  \ell^1_{s,d}  \to X $  by 
	\begin{align*}
		(K c)_{n,j} &= 
		\begin{dcases}
			\frac{c_{n,j}}{\omega^2(n^2-j)} & \mbox{if } p \leq n \mbox{ and }  n \leq j \leq n^2 - (p-1)(2 n -p)  \\
			0 & \mbox{otherwise}
		\end{dcases} \\
		(L c)_{n,j} &= 
		\begin{dcases}
			\sum_{ n \leq k < n^2} c_{n,k} & \mbox{if } j = n^2 \\
			0 & \mbox{otherwise}
		\end{dcases}  \\
(\iota \phi)_{n,j} &= 
	\begin{dcases}
	\phi_n & \mbox{if } j = n^2 \\
	0 & \mbox{otherwise.}
	\end{dcases} 
	\end{align*}
Note that $K$ is a compact map, that is it can be well approximated by finite dimensional maps. We may compute  norms 
\begin{align*}
	\|K \| &=   \frac{1}{ \|\omega\|^2  p(p-1)},   
	&
	\|L\| &= 1,  
	&
	\| \iota\| &= 1,
\end{align*} 
where $\|\omega\|^2  = \sum_{1 \leq i \leq d} \omega_i^2$. 
Note that these norms do not depend on $s$, the spatial regularity of our initial data $ \phi \in \ell^1_{s,d}$.  

\begin{definition} \label{def:Newton_Map}
	For fixed  $ \phi \in \ell^1_{s,d}$, define the operator $T  : X \to X$ by
\[
T(c) := \iota \phi +  (I -L)K  c^p  
\]
\end{definition} 
By construction, fixed points $c=T(c) $ will satisfy the recursive relation in \eqref{eq:FF_Recursive}. 
It then follows from Lemma \ref{prop:FF_Recursive} that fixed points of $T$ correspond with (quasi)periodic solutions of \eqref{eq:NLS}. 
To prove Theorem \ref{prop:Intro_Quasiperiodic}  we aim to show that $T$ has a fixed point in a ball 
\[
B_r(\hat{c}) = \{ c \in X : \| c - \hat{c} \| \leq r \} . 
\]  
We note that the mapping $ c \mapsto c^p$ is Fr\'echet differentiable and $ K$ is a compact map, thereby $T$ is also compact and Fr\'echet differentiable.  
By the Schauder fixed point theorem, if there exists some $ \hat{c} \in X$ and $ r>0$  for which  $ T$ maps the ball $B_r(\hat{c})$ inside of itself, then there must exist at least one point  $ \tilde{c} \in B_r(\hat{c})$ for which $ T(\tilde{c}) = \tilde{c}$.  
Moreover this fixed point must be unique, as each of its coefficients are uniquely determined by the recursion relation in \eqref{eq:FF_Recursive}.

\begin{proof}[Proof of Theorem \ref{prop:Intro_Quasiperiodic}]
	For initial data $ u_0 \in A_s^+(\T^d)$ let us denote its Fourier coefficients by $ \hat{u}_0 = \phi \in \ell_{s,d}^1$, and the maximal interval of existence of its solution by $J$. 
	From Corollary \ref{thm:ExplicitSolution} there exists a function $ a : J \to \ell_{s,d}^1$ such that $ u(t,x) = \sum_{n \in \N^d} a_n(t) e^{i \omega m x}$. 
	By Lemma \ref{prop:FF_Recursive}  there exists a unique sequence $c \in \cX$ such that the component functions $a_n(t)$ can be expressed by the Fourier polynomials in \eqref{eq:PeriodicCoefficients} having frequency vector $ \{ \omega_i^2 / 2 \pi \}_{i=1}^d$. 
	To show that $u(t)$ is globally defined and in fact a (quasi)periodic solution, it suffices to prove that $ \| c \|  < \infty$. 
	This is equivalent to showing the operator $T$, given in Definition \ref{def:Newton_Map}, has a fixed point in some ball of finite radius. 
	
	Since $T$ is a compact Fr\'echet differentiable map, by the Schauder fixed point theorem it suffices to find conditions on $r$ and $\phi$ such that $\sup_{ c \in B_r(0)} \| T(c) \| \leq r$.   
	By applying norm estimates on the linear operators and the Banach algebra property of $X$, we obtain  
	\begin{align*}
	\sup_{ c \in B_r(0)} \| T(c) \| &\leq \| \phi \| + \sup_{ c \in B_r(0)} \|   (I-L)K \|  \cdot \| c^p \| \\
	&\leq \| \phi\| + \frac{2 r^p}{\|\omega\|^2 p(p-1)} .
	\end{align*}
	To focus on when this expression is bounded above by $r$, we define\footnote{Note that function $P$ is unrelated and not to be confused with functions $P_n$ in \eqref{eq:ExplicitFormula}.}  
	\[
	P(r) := \| \phi \| + \frac{2 r^p}{\|\omega\|^2 p(p-1)} -r .
	\]

To find a value of $r$ such that $P(r)\leq 0$ we choose the minimum value of $P(r)$ and see where this is non-positive. 	
We may compute   that  $0=P'(r) = \frac{2 r^{p-1}}{\|\omega\|^2 (p-1)} -1$ is solved precisely when $ r_0 = \left( \frac{\|\omega\|^2 (p-1)}{2}\right)^{1/(p-1)} $. 
Plugging this in we obtain 
\[
P(r_0) = 
\|\phi\| - \left(
1-\frac{2}{\|\omega\|^2p(p-1)} \frac{\|\omega\|^2(p-1)}{2}
\right) r_0 
=
\|\phi\| - \frac{p-1}{p} \left( \frac{\|\omega\|^2 (p-1)}{2}\right)^{1/(p-1)}
\]
Hence if $\|\phi\| \leq    \frac{p-1}{p} \left( \frac{\|\omega\|^2 (p-1)}{2}\right)^{1/(p-1)}$ then  $P(r_0) \leq 0$.  
Hence, the ball $B_{r_0}(0)$ is mapped into itself. 
By the Schauder fixed point theorem, there exists a fixed point of the map $T$ in the ball. 
Hence for $c$ defined above, we have $ c = T(c)$ with  $ \| c\| \leq r_0$, and moreover $ \|u(t)\| \leq r_0$ for all $ t \in \R$. 

\end{proof}


\section{Solutions with monochromatic initial data}
\label{sec:Monochromatic}
When studying evolutionary PDEs it is natural to begin by studying solutions having a single Fourier mode as their initial data. 
For example, the classical NLS $i u_t = \triangle u \pm |u|^{p-1} u$ has periodic solutions 
\[
 u(t,x) = A e^{it(  (n\omega)^2 \mp |A|^{p-1} )} e^{i n\omega x} , \qquad A \in \C
\]  
In contrast, the only solutions to \eqref{eq:NLS} of the form $ u(t,x) = \psi(t) \phi(x)$ are either constant in $ t$ or constant in $x$, cf \cite{jaquette2020global}. 
Nevertheless, as worked out in Examples \ref{eg:Recursive} and  \ref{eg:ExplicitFunctions}, we can explicitly solve for the Fourier coefficients of a solution with monochromatic initial data. 
Note from Example \ref{eg:ExplicitFunctions}  that solutions $a_n(t)$ are related by a scaling $ A^n / \omega^{2(n-1)}$. 
In Theorem \ref{prop:Rescaling} below we show that if the initial data is a single Fourier mode, then all of the solution coefficients are related by geometric scaling in the spatial modes. 
Throughout  this section, we study \eqref{eq:NLS} with $d=1$ and monochromatic initial data.  
 
\begin{theorem}  \label{prop:Rescaling}

Fix $ p \geq 2$ and $ d =1$ and consider   \eqref{eq:NLS}  	 with initial data $u_0(x) =   A e^{ i \omega  x}$, and 
define $ c(A,\omega) \in \cX$ as the unique sequence  satisfying  the recursive relationship in \eqref{eq:FF_Recursive}. 
If $ \tilde{c} = c(1,1)$ then 
\begin{align} \label{eq:GeometricScaling}
	\big(c(A,\omega)\big)_{n,j} &=  \frac{A^n}{ \omega^{\frac{2(n-1)}{p-1}}  } \tilde{c}_{n,j}  .
\end{align}

\end{theorem}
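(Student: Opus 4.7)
The plan is to prove the identity \eqref{eq:GeometricScaling} by strong induction on $n \in \mathbb{N}$ (note that $d=1$ here), using the recursion from Lemma \ref{prop:FF_Recursive}. For monochromatic initial data $u_0(x)=Ae^{i\omega x}$ the only nonzero Fourier coefficient is $\phi_1 = A$, so in particular $\phi_n = 0$ for all $n \geq 2$. This drastically simplifies the second case of the recursion \eqref{eq:FF_Recursive}.

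For the base case $n=1$, the set $\{j : 1 \leq j < 1\}$ is empty, so only the $j = n^2 = 1$ case of the recursion fires and yields $c(A,\omega)_{1,1} = \phi_1 = A$, while $\tilde{c}_{1,1} = 1$. Thus both sides of \eqref{eq:GeometricScaling} equal $A$, matching with $\omega^{2(n-1)/(p-1)} = \omega^0 = 1$.

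For the inductive step I would fix $n \geq 2$ and assume \eqref{eq:GeometricScaling} holds for all $m < n$. The key observation, already used in the proof of Lemma \ref{prop:Xclosed}, is that every term appearing in $(c(A,\omega)^p)_{n,j}$ is a product $c(A,\omega)_{n_1,j_1}\cdots c(A,\omega)_{n_p,j_p}$ with $n_1+\cdots+n_p = n$ and each $n_i \geq 1$, so each $n_i \leq n-(p-1) < n$ and the inductive hypothesis applies to every factor. Collecting the scalars and using that $n_1 + \cdots + n_p = n$ gives
\[
(c(A,\omega)^p)_{n,j} \;=\; \frac{A^n}{\omega^{2(n-p)/(p-1)}}\,(\tilde{c}^p)_{n,j}.
\]
Substituting this into both cases of \eqref{eq:FF_Recursive}, together with the fact that $\phi_n = 0$ for $n \geq 2$, and comparing with the identical recursion for $\tilde{c}$ (which has $\tilde\omega = 1$), the result follows provided
\[
\frac{2(n-p)}{p-1} + 2 \;=\; \frac{2(n-1)}{p-1},
\]
which is an elementary identity that simplifies on both sides to $2(n-1)$ after clearing denominators.

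There is no substantive obstacle here; the only minor care is to handle the $j = n^2$ branch of the recursion separately, noting that the $\phi_n$ term vanishes for $n \geq 2$ so that it reduces to the same geometric scaling as the $n \leq j < n^2$ branch. The exponent bookkeeping above is the sole nontrivial check, and it works out exactly because the nonlinearity $u^p$ contributes the correct power to match $2(n-1)/(p-1)$ on the left hand side.
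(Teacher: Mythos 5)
Your proposal is correct and follows essentially the same route as the paper: induct on $n$, use $\phi_n=0$ for $n\geq 2$, pull the scaling $A^n/\omega^{2(n-p)/(p-1)}$ out of $(c^p)_{n,j}$, and absorb the extra $\omega^{-2}$ from the divisor $\omega^2(n^2-j)$ to get the exponent $2(n-1)/(p-1)$. The only cosmetic difference is that the paper verifies that the scaled sequence satisfies the recursion and appeals to uniqueness, whereas you phrase it as a direct induction on the coefficients; these are the same argument.
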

\begin{proof} 
	We show that if the coefficients $c(A,\omega)$ are defined as in \eqref{eq:GeometricScaling} then they will satisfy the recursive relationship in \eqref{eq:FF_Recursive}. 
When $n=1$, then  \eqref{eq:GeometricScaling}  says that $ c(A,\omega)_{1,j} = A \tilde{c}_{1,j} = A \delta_{1,j}$ using the Kronecker delta,  whereby  \eqref{eq:FF_Recursive} is satisfied. 
For $n \geq 2 $ we first simplify the product $ (c^p)_{n,j}$ using the assumption in \eqref{eq:GeometricScaling}. 
	\begin{align}
		(c^p)_{n,j}
		&= 
		\sum_{\substack{k_1 + \dots + k_p = n \\ 1 \leq k_1, \dots , k_p < n } }
		c_{k_1,j_1} 
		\cdots   
		c_{k_p,j_p}  \nonumber
		\\ 
		 &= 
		\sum_{\substack{k_1 + \dots + k_p = n \\ 1 \leq k_1, \dots , k_p < n } }
		\left(
		\frac{A^{k_1}}{ \omega^{\frac{2({k_1}-1)}{p-1}}  } \tilde{c}_{k_1,j_1} 
		\right)
		\cdots   
		\left(	 	 \frac{A^{k_p}}{ \omega^{\frac{2({k_p}-1)}{p-1}}  } \tilde{c}_{k_p,j_p} 
		\right)  
		\nonumber  \\ 
		&=  \frac{A^{n}}{ \omega^{\frac{2(n-p)}{p-1}}  } (\tilde{c}^p)_{n,j} . \label{eq:SimplifiedRescalingProduct}
	\end{align}

Our  assumption that $ \tilde{c} = c(1,1)$ satisfies \eqref{eq:FF_Recursive} for $n \geq 2$ can be restated as 
	\begin{align}  \label{eq:SimpleRecursionRelation}
	\tilde{c}_{n,j} &= 
	\begin{dcases} 
	\frac{(\tilde{c}^p)_{n,j} }{		(n^2-j) \;\;}
	& \mbox{ if } n\leq j<n^2  \\
 -
	\sum_{n \leq k < n^2} \frac{(\tilde{c}^p)_{n,k} }{		(n^2-k) \;\;}
	& \mbox{ if } j = n^2 .
	\end{dcases}
	\end{align}
	For the components $c(A,\omega)_{n,j}$ with $j < n^2$, we can simplify the RHS of \eqref{eq:FF_Recursive}, using \eqref{eq:SimplifiedRescalingProduct} and \eqref{eq:SimpleRecursionRelation} as below  
	\begin{align} \nonumber 
	\frac{(c^p)_{n,j} }{ \omega^2		(n^2-j) \;\;} &=
	 \frac{A^{n}}{ \omega^{\frac{2(n-p)}{p-1}}  }
	  \frac{(\tilde{c}^p)_{n,j}}{\omega^2(n^2-j)}
	   \\
		& = \frac{A^n}{ \omega^{\frac{2(n-1)}{p-1}}  } (\tilde{c} )_{n,j} 
\label{eq:RescalingMatched}
	\end{align} 
	By the rescaling given in \eqref{eq:GeometricScaling}, the result in \eqref{eq:RescalingMatched} equals $ (c)_{n,j}$. Hence  recurrence relation  \eqref{eq:FF_Recursive} is satisfied for $ j<n^2$.  
By the same argument the recurrence relation \eqref{eq:FF_Recursive} for coefficients $ c_{n,j}$ is satisfied for $ j=n^2$.

\end{proof}

Thus 
Theorem \ref{prop:Rescaling} shows that the $n$th coefficient is essentially being scaled by 
 $ \left(  A / \omega^{\frac{2}{p-1}  } \right)^n$.  
 If the ratio $A / \omega^{\frac{2}{p-1}} $ is very small, then we would expect the coefficients of $ c(A,\omega)$ to shrink geometrically, and expect the solution $u(t)$ will correspond to a periodic orbit with period  $2 \pi / \omega^2$. 
 Likewise, if the ratio $A /\omega^{\frac{2}{p-1}} $ is very large, then we would expect the coefficients of $ c(A,\omega)$ to grow geometrically, and we may expect the solution to blowup in finite time. 
 
 Moreover, if we fix some $p\geq 2 $ and $\omega =1$ and vary $A$,  we might expect there to be some value $A^* >0 $ such that if $A < A^*$ then $ \| c(A,1) \|_X < \infty$ and if $ A > A^*$ then $ \| c(A,1) \|_X = \infty$.  
 Due to the geometric scaling of $c(A,1)$, the critical value $A^*$ is not sensitive to the choice of $ \ell^1$ for the norm on $X$. 
 We formalize this notion below. 
 \begin{definition}
 	Fix $ p \geq 2$ and $ d =1$ and consider   \eqref{eq:NLS}  	 with initial data $u_0(x) =   A e^{ i   x}$, and 
 	define $  c(A,1) \in \cX$ as the unique sequence  satisfying  the recursive relationship in \eqref{eq:FF_Recursive}. 
 	For $ 1 \leq q \leq \infty$ define 
 	\[
 	A^* = A^{*,q} = \sup \left\{ A> 0 : \| c(A,1) \|_{\ell^q_{0,d} \otimes \ell^q_{0,d}}  < \infty
	\right\} .
 	\]
 \end{definition}
\begin{proposition}
	The constant $ A^*$ is independant of $q$. 
\end{proposition}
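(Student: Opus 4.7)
The plan is to reduce to the universal sequence $\tilde c := c(1,1)$ via Theorem~\ref{prop:Rescaling}, which gives $c(A,1)_{n,j} = A^n \tilde c_{n,j}$, and then exploit the fact that each ``level'' $n$ contains only polynomially many nonzero coefficients. First I would note that since $\tilde c \in \cX$ and $d=1$, the coefficient $\tilde c_{n,j}$ vanishes unless $n \leq j \leq n^2$, so there are at most $n^2 - n + 1 \leq n^2$ nonzero entries at level $n$. Writing $M_n := \sup_j |\tilde c_{n,j}|$ and $b_n^{(q)} := \sum_j |\tilde c_{n,j}|^q$ for $1 \leq q < \infty$, the finite-dimensional equivalence of norms on each level yields the elementary sandwich
\[
M_n^q \;\leq\; b_n^{(q)} \;\leq\; n^2\, M_n^q, \qquad 1 \leq q < \infty.
\]

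Next I would set $\rho := \limsup_n M_n^{1/n}$ and $R := 1/\rho$ (with the convention $R=\infty$ if $\rho=0$), and claim that $A^{*,q} = R$ for every admissible $q$. Taking $qn$-th roots in the sandwich and using $n^{2/(qn)} \to 1$ yields $\limsup_n \bigl(b_n^{(q)}\bigr)^{1/(qn)} = \rho$ for every $1 \leq q < \infty$. The Cauchy--Hadamard root test applied to
\[
\|c(A,1)\|_{\ell^q_{0,d}\otimes \ell^q_{0,d}}^{q} \;=\; \sum_{n \geq 1} |A|^{qn}\, b_n^{(q)}
\]
then gives convergence for $|A| < R$ and divergence for $|A| > R$, so $A^{*,q} = R$ for each finite $q$. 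For $q = \infty$, the quantity $\|c(A,1)\|_{\ell^\infty_{0,d}\otimes\ell^\infty_{0,d}} = \sup_n |A|^n M_n$ satisfies $\limsup_n \bigl(|A|^n M_n\bigr)^{1/n} = |A|/R$; so it is unbounded whenever $|A| > R$ (a subsequence escapes to infinity) and decays exponentially whenever $|A| < R$, giving $A^{*,\infty} = R$ as well.

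The only mild subtlety is the endpoint $|A|=R$, where the sandwich bound alone cannot decide finiteness of the norm. However, since each $A^{*,q}$ is defined as a supremum over $A$-values, the behavior at the single boundary value does not affect the supremum, and the identity $A^{*,q}=R$ holds uniformly in $1 \leq q \leq \infty$. Beyond Theorem~\ref{prop:Rescaling} and the root test, no further machinery is required; the proposition is essentially an elementary consequence of the geometric rescaling combined with the polynomial width ($\leq n^2$) of each spatial level in $\cX$.
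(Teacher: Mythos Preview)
Your argument is correct. Both you and the paper rely on the same two ingredients: the geometric rescaling $c(A,1)_{n,j}=A^n\tilde c_{n,j}$ from Theorem~\ref{prop:Rescaling}, and the fact that each spatial level $n$ carries at most $n^2$ nonzero coefficients. The packaging, however, differs. The paper takes $A^{*,\infty}$ as the reference value and shows directly that $A^{*,q}=A^{*,\infty}$ for each finite $q$: if $A>A^{*,\infty}$ one exhibits a subsequence with $|A^{n_i}\tilde c_{n_i,j_i}|>1$, forcing the $\ell^q$ norm to diverge; if $A<A^{*,\infty}$ one interpolates via a midpoint $A_0=(A+A^{*,\infty})/2$ and bounds $\|c(A,1)\|_{\ell^q}^q$ by $M^q\sum_n n^2(|A|/A_0)^{qn}$. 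Your route instead introduces the intrinsic quantity $R=1/\limsup_n M_n^{1/n}$ and identifies every $A^{*,q}$ with $R$ via the Cauchy--Hadamard root test. This buys a cleaner conceptual picture---$A^*$ is visibly a radius of convergence---at the mild cost of invoking the root test rather than bare hands. The paper's version is marginally more elementary; yours makes the underlying power-series structure explicit.
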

\begin{proof}
	Fix $ 1 \leq q < \infty$. We show that $ A^{*,q} = A^{*,\infty}$. For  $ \tilde{c} = c(1,1) \in \cX$, it follows from Lemma \ref{prop:Rescaling} that 
	\[
	A^{*,\infty} = \sup 
	\left\{ 
	A > 0 : \sup_{n \geq 0} \left( A^n \sup_{n \leq j \leq n^2} |\tilde{c}_{n,j}| \right) < \infty
	\right\} .
	\]
	
	If $ A > A^{*,\infty}$, then there exists a subsequence $\{ \tilde{c}_{n_i,j_i}\}_{i \in \N}$ such that $ | A^{n_i} \tilde{c}_{n_i,j_i} | >1$. 
	Thereby $ \| c(A,1) \|_{\ell^q_{0,d} \otimes \ell^q_{0,d}} = + \infty$, and hence $ A^{*,q}  \leq A^{*,\infty}$.  
	
		Suppose that $ A < A^{*,\infty}$. 
		Fix constants $ A_0 = \frac{A+A^{*,\infty}}{2}$,   $ M = \sup_{n \geq 0} \{ A_0^n \sup_{n \leq j \leq n^2} |\tilde{c}_{n,j}|  \} $, and  $ \epsilon = | A|/A_0 $, whereby $M< \infty$ and $ \epsilon < 1$. 
		Then we may calculate 
		\begin{align*}
			\| c(A,1) \|^q_{\ell^q_{0,d} \otimes \ell^q_{0,d}} &=  \sum_{n \geq 0 } \sum_{ n \leq j \leq n^2} | A^n \tilde{c}_{n,j} |^q \\
			&\leq \sum_{n \geq 0 } n^2
			 \left(
			  \frac{|A|}{A_0}
			\right)^{nq}
			 \left(
			 A_0^n \sup_{n \leq j \leq n^2} |\tilde{c}_{n,j}|
			\right)^q \\
			&\leq \sum_{n \geq 0 } n^2 \eps^{nq} M^{q}
		\end{align*}
		Since $ \eps < 1$ then it follows that the sum above is finite, and hence $ A^{*,q}  \geq A^{*,\infty}$.   Thus $ A^{*,q}  = A^{*,\infty}$.   
	
\end{proof}

For each $ p \geq 2$ we can make an empirical estimate for this value of $ A^*$ by using  the formula in \eqref{eq:FF_Recursive} to algorithmically compute the coefficients to any order, and then observe whether the coefficients appear to be growing or shrinking exponentially. 
For $p=2$ we estimate that $ A^*$ is about  $3.37 $, which was computed using a linear regression to fit $\ln \sum_{j=n}^{n^2} | c(1,1)_{n,j}| $ using $ 100 \leq n \leq 300$ (the code for this calculation is available at \cite{jaquette2020global}). 
 While the statistical significance of this test was satisfactory, $R^2 = 0.9999996$, the residual errors are not normally distributed and the estimated value of $A^*$ is sensitive to the upper and lower limits of $n$ used to fit the data.

 In the remainder of the paper, we prove rigorous bounds on the value of $A^*$ for $ p=2$. 
 To obtain a lower bound, note that from Theorem \ref{prop:Intro_Quasiperiodic}  it follows that   $ A^* \geq   \frac{1}{4}$.  
 Moreover if $ |A| \leq  \frac{1}{4}$ then  $\| c(A,1)\| \leq  \frac{1}{2}$.  
To more closely approach the critical value, we use a computer assisted proof in Theorem \ref{prop:Intro_Blowup} (a) to show that $ A^* \geq 3$. 
We begin first  by proving Theorem \ref{prop:Intro_Blowup} (b), that if $A \geq 6$ then the solution will exhibit finite time blowup in the $L^2$ norm.

 \begin{proof}[Proof of Theorem \ref{prop:Intro_Blowup} (b)]
 	Consider the intial data $ u_0(x) = A e^{i \omega x}$ with $A = 1 = \omega$ and fix $\tilde{c} = c(1,1)   \in \cX$ as the unique solution to the recursion relation in \eqref{eq:FF_Recursive}.  
 	We prove finite time blowup by first  proving a lower bound on the  Fourier coefficients $\tilde{c}_{n,n}$, and then using  Parseval's theorem to obtain a lower bound on the $L^2$ norm.  
 	We then apply the scaling from Theorem \ref{prop:Rescaling} to show that the $\ell^2$ norm of the coefficients $c(A,1)$ is unbounded for $ |A| \geq 6$. 
 	
 	To estimate $ \tilde{c}_{n,n}=c(1,1)_{n,n}$ note first that $ \tilde{c}_{1,1} = 1$. 
 	Since $c_{n,j} = 0$ for all $ j< n$,  the formula in \eqref{eq:FF_Recursive} for $ \tilde{c}_{n,n}$ reduces to  
 	\begin{align} \label{eq:CnnFormula}
 	\tilde{c}_{n,n} = \sum_{k=1}^{n-1} \frac{\tilde{c}_{k,k} \tilde{c}_{n-k,n-k}}{n^2 -n}, 
 	\end{align}
 	for all $ n \geq 2$.
 	Note then that $\tilde{c}_{n,n}$ will  always be a positive real number. 
 	
 	We prove by induction that a lower bound $  \tilde{c}_{k,k}  \geq \alpha \frac{ k }{\beta^k}$ is satisfied for some $ \alpha, \beta >0$ (and in particular for $ \alpha=\beta = 6$). 
 	As $\tilde{c}_{1,1}=1$, this inequality is satisfied in the base case if and only if $\beta \geq \alpha$. 
 	For the inductive step, we obtain 
 	\begin{align*}
 	\tilde{c}_{n,n} &\geq  
 	\frac{1}{n^2-n} 
 	\sum_{k=1}^{n-1} 
 	\left(  
 	\frac{\alpha k}{\beta^k}
 	\right)
 	\left(  
 	\frac{\alpha (n-k)}{\beta^{n-k}}
 	\right) \\
 	& = 
 	\frac{1}{n^2-n}  
 	\frac{\alpha^2}{\beta^n}
 	\sum_{k=1}^{n-1}  kn - k^2 \\
 	& = 
 	\frac{1}{n(n-1)}  
 	\frac{\alpha^2}{\beta^n} 
 	\left(
 	n \frac{(n-1)n}{2} - 
 	\frac{(n-1)n(2(n-1)+1)}{6}
 	\right) \\
 	& = 
 	\frac{\alpha^2}{\beta^n} 
 	\left(
 	\frac{n+1}{6}
 	\right) .
 	\end{align*}
 	Hence,  the inductive step is satisfied if 
 	\begin{align*}
 	\frac{\alpha^2}{\beta^n} 
 	\left(
 	\frac{n+1}{6}
 	\right) 
 	&	\geq \frac{\alpha n}{\beta^n} . 
 	\end{align*}
 	After simplifying, we find that we need $\alpha \geq 6$. 
 	Hence, we can take $ \alpha = \beta = 6$, and thereby 
 	\[
 	|\tilde{c}_{n,n}| \geq \frac{6n}{6^n}. 
 	\]
 	
 	To prove finite time blowup in the $L^2$ norm, consider now the solution $ u(t,x)$ through the initial data $u_0(x) = A e^{i \omega x}$ with $|A|/\omega^2 \geq 6$.  
 	Fix $c = c(A,\omega)   \in \cX$ as the unique solution to the recursion relation in \eqref{eq:FF_Recursive}. For $ \tilde{c} = c(1,1)$ as above, it follows from Lemma \ref{prop:Rescaling}   that $ c_{n,j} = 	\frac{A^n}{\omega^{2(n-1)}}
 	\tilde{c}_{n,j}  $. 
 	Let 	$J^{(1)} \subseteq \R$ denote the  maximal time of existence of $u(t)$ in $A^+_0(\T^1)$. 
 	Then for all $ t \in J^{(1)}$ the function $u$ is given by: 
 	\begin{align} \label{eq:BlowupSolution}
 			u(t,x) &=
 		\sum_{n=1}^{\infty} 
a_n(t)
 		e^{i \omega n x} ,
 		&
 		a_n(t) &=	\sum_{n \leq j \leq n^2} 
 		\frac{A^n}{\omega^{2(n-1)}}
 		\tilde{c}_{n,j} 
 		e^{i \omega^2 j t} .
 	\end{align}
 	Moreover we have $  J^{(1)} = (-T_{min}^{(1)},T_{max}^{(1)})$ for constants defined as  
 	\begin{align*}
 	-T_{min}^{(1)} &= \inf \left\{ t < 0 : \sum_{n=1}^\infty | a_n(t) | < \infty \right\}
 	,
 	&
 	T_{max}^{(1)} &= \sup \left\{ t > 0 : \sum_{n=1}^\infty | a_n(t) | < \infty \right\}.
 	\end{align*}

 	Note furthermore that the PDE \eqref{eq:NLS} with $(p,d)=(2,1)$ is locally well-posed on $L^2(\T^1)$, and even $ H^{s}(\T^1)$ for $ s > \frac{-1}{2}$, cf \cite{kenig1996quadratic}. Let $ J^{(2)} \supseteq J^{(1)}$ denote the maximal interval of existence of $u(t)$ in $L^2(\T^1)$. 
 	By the construction made in Corollary \ref{thm:ExplicitSolution}, the solution in \eqref{eq:BlowupSolution} satisfies Duhamel's formula for all $ t \in J^{(2)}$. 
 	As Parseval's identity gives a equivalence between the norms of $L^2$ and $ \ell^2$, we may state the maximal interval of existence as 	$  J^{(2)} = (-T_{min}^{(2)},T_{max}^{(2)})$ where  	
 	\begin{align*}
 	-T_{min}^{(2)} &= \inf \left\{ t < 0 : \sum_{n=1}^\infty | a_n(t) |^2 < \infty \right\}
 	,
 	&
 	T_{max}^{(2)} &= \sup \left\{ t > 0 : \sum_{n=1}^\infty | a_n(t) |^2 < \infty \right\}.
 	\end{align*}

 We now argue by contradiction to prove finite time blowup; suppose that $ T_{max}^{(2)}  > \frac{2 \pi }{\omega^2}$.  
 Thereby  
 $u \in 
  L^2\big([0,\frac{2 \pi }{\omega^2}] , L^2(  \T , \C)\big)
  \cong 
  L^2\big([0,\frac{2 \pi }{\omega^2}] \times  \T , \C\big) $, and by the formula in \eqref{eq:BlowupSolution} the solution $u$ is $2 \pi / \omega$ periodic in space and $2 \pi / \omega^2$ periodic in time.  
By Parseval's theorem it follows that  
 	\begin{align*}
 		\int_{[0,\frac{2\pi}{\omega^2}]\times \T} |u(t,x)|^2& = 
 		\left(
 		\frac{2 \pi}{\omega^2}
 		\right) 
 		\left(
 		\frac{2 \pi}{\omega}
 		\right) 
 		\sum_{n=1}^{\infty} 
 		\sum_{n \leq j \leq n^2} 
 		\left|
 		\frac{A^n}{\omega^{2(n-1)}}
 		\tilde{c}_{n,j} 
 		\right|^2  \\
 		&>  
 		\frac{4 \pi^2}{\omega^3} 
 		\sum_{n=1}^{\infty}  
 		\left|
 		\frac{A^n}{\omega^{2(n-1)}}
 		\tilde{c}_{n,n} 
 		\right|^2  \\ 		
 		&\geq
 		4\pi^2 \omega 
 		\sum_{n=1}^\infty 
 		\left|
 		\frac{A^n 6n}{\omega^{2n} 6^n}
 		\right|^2
 	\end{align*}
 	If $A \geq  6\omega^2$ then the series diverges, whereby $\| u \|_{L^2([0,\frac{2 \pi }{\omega^2}] \times \T )} = + \infty$. 
 	This  contradiction proves that the maximal time of existence is bounded as  $  T_{max}^{(2)}  \leq   \frac{2\pi}{\omega^2}$.  
 	Since \eqref{eq:NLS} is locally well posed in $L^2$ then it is necessarily the case that $\limsup_{t \to  T_{max}^{(2)} } \| u(t)\|_{L^2} = + \infty$.
 \end{proof}

This blowup result trivially extends to the quadratic case of \eqref{eq:NLS} posed on $ \T^d$ for $ d \geq 2$.  
We also obtain the following extension. 
 
 \begin{corollary} \label{prop:GeneralBlowup}
 	Consider     \eqref{eq:NLS}  with $(p,d)=(2,1)$ and  	   initial data $u_0(x) =   A e^{ i \omega  x} + \sum_{n \geq 2} \phi_n e^{i \omega n x}$ for $A \in \C$ and $ \omega >0$.   
 	If $ A/\omega^2 \geq  6$ then the solution will   blowup in the $L^2$ norm in finite time. 
 \end{corollary}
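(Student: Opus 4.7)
The plan is to observe that adding higher Fourier modes $\phi_n$ for $n \geq 2$ to the initial data leaves the \emph{diagonal} coefficients $c_{n,n}$ of the associated sequence $c \in \cX$ unchanged from the monochromatic case, so the lower bound from Theorem \ref{prop:Intro_Blowup}(b) survives verbatim and the Parseval-plus-contradiction argument applies with only cosmetic modifications.

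To justify the decoupling, note that in the recursion \eqref{eq:FF_Recursive} the initial datum $\phi_n$ enters \emph{only} through the coefficient $c_{n,n^2}$ via the $j=n^2$ branch. For $n \geq 2$ we have $n < n^2$, so the diagonal coefficient is governed purely by
\[
c_{n,n} \;=\; \frac{(c^2)_{n,n}}{\omega^2 (n^2 - n)}.
\]
Moreover, by the index analysis underlying Lemma \ref{prop:Xclosed}, any nonzero contribution to $(c^2)_{n,n}$ comes from pairs $(n_1,j_1),(n_2,j_2)$ with $n_1+n_2 = n = j_1+j_2$ and $n_i \leq j_i$, which forces $j_i = n_i$; hence $(c^2)_{n,n} = \sum_{k=1}^{n-1} c_{k,k}\, c_{n-k,n-k}$. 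This is exactly the recursion \eqref{eq:CnnFormula} with base case $c_{1,1} = \phi_1 = A$, so $\{c_{n,n}\}_{n\geq 1}$ coincides with the monochromatic sequence $c(A,\omega)_{n,n}$ independently of $\phi_2, \phi_3, \dots$. Combining the induction from the proof of Theorem \ref{prop:Intro_Blowup}(b) with the scaling \eqref{eq:GeometricScaling} then yields $|c_{n,n}| \geq \frac{|A|^n}{\omega^{2(n-1)}} \cdot \frac{6n}{6^n}$, and the hypothesis $A/\omega^2 \geq 6$ reduces this to $|c_{n,n}| \geq 6n\omega^2$, whence $\sum_{n \geq 1} |c_{n,n}|^2 = +\infty$.

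The remainder is a verbatim reuse of the Parseval contradiction from Theorem \ref{prop:Intro_Blowup}(b): letting $J^{(2)} = (-T^{(2)}_{\min}, T^{(2)}_{\max})$ denote the maximal $L^2$-interval of existence and supposing $T^{(2)}_{\max} > 2\pi/\omega^2$, the Fourier series supplied by Corollary \ref{thm:ExplicitSolution} represents the honest $L^2$ solution on the space-time cell $[0, 2\pi/\omega^2] \times \T$, so Parseval gives
\[
\|u\|_{L^2([0, 2\pi/\omega^2] \times \T)}^2 \;\geq\; \frac{4\pi^2}{\omega^3}\sum_{n \geq 1} |c_{n,n}|^2 \;=\; +\infty,
\]
a contradiction; hence $T^{(2)}_{\max} \leq 2\pi/\omega^2$, and $L^2$ local well-posedness promotes this to $\limsup_{t \to T^{(2)}_{\max}} \|u(t)\|_{L^2} = +\infty$. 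I expect the only subtle point — rather than a genuine obstacle — to be the same one already handled in Theorem \ref{prop:Intro_Blowup}(b): verifying that the Fourier series of Corollary \ref{thm:ExplicitSolution} represents the honest $L^2$ solution on all of $J^{(2)}$ and not merely on the smaller $A_0^+$-interval $J^{(1)}$, which is resolved by Duhamel's formula together with $L^2$-uniqueness exactly as before.
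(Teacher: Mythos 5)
Your proposal is correct and takes essentially the same route as the paper, whose entire proof is the observation that the diagonal recursion \eqref{eq:CnnFormula} is unaffected by the extra modes $\phi_n$, $n \geq 2$, so the argument of Theorem \ref{prop:Intro_Blowup}(b) applies verbatim. Your index analysis forcing $j_i = n_i$ in $(c^2)_{n,n}$ merely spells out the decoupling that the paper leaves implicit, and your handling of the $J^{(1)}$ versus $J^{(2)}$ subtlety mirrors the paper's Duhamel argument.
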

 \begin{proof}
 	The formula in $ \tilde{c}_{n,n}$ in \eqref{eq:CnnFormula} stays the same, so the rest of the proof is identical to that of Theorem \ref{prop:Intro_Blowup} (b).
 \end{proof}

  From our numerical calculation of $c(1,1)$ with $p=2$ we estimate that the critical value $A^*$ is approximately $3.37$.  
  Using a computer assisted proof, we are able to rigorously show that if $|A | \leq 3$ then $ \| c(A,1) \| < \infty$. 
  The approach of this proof is in the same spirit as the proof in  Theorem \ref{prop:Intro_Quasiperiodic}, where we showed that $T$ has a fixed point in some ball $B_r(\hat{c})$.  
In Theorem \ref{prop:Intro_Quasiperiodic} we considered arbitrary initial data and applied the Schauder fixed point theorem to a  ball centered about $ \hat{c} =0$.
If we have explicitly given initial data, then we can explicitly compute Fourier coefficients up to any fixed order.  We can obtain sharper results by centering about this finite truncation.

  We note that all of these coefficients $c(3,1)_{n,j}$ are rational and it would be possible to exactly represent these coefficients with integers on a computer. Unfortunately the size of the denominators grows quite rapidly. To save computational expense, we represent each coefficient as an interval which contains the precise value and whose endpoints are floating point numbers. By using interval arithmetic we are able to rigorously manipulate these coefficients and keep track of any rounding error (cf \cite{gomez2019computer}).

  To show that $T$ maps a ball $B_r(\hat{c})$ into itself and moreover is a contraction, we apply a  parameterized Newton-Kantorovich theorem. 
  The two parameters in the theorem are $\hat{c}$, the approximate fixed point,   and $r$,  the radius of the ball.   
  Such a theorem is commonly used in computer assisted proofs of nonlinear dynamics, see for example  \cite{jaquette2020global} studying \eqref{eq:NLS} and more generally \cite{van2015rigorous,james2017validated,gomez2019computer}. 
  This version of the Newton-Kantorovich theorem reduces the question of whether $T$ maps a ball of radius $r$ into itself, down to whether an explicitly computable polynomial $P(r)$ satisfies a single inequality, see   \eqref{eq:RadiiPolynomial}, and is sometimes referred to as the radii polynomial approach.

  \begin{theorem}[cf \cite{james2017validated}] 
  	\label{prop:RadiiPolynomial}
  	Let $X$ be a Banach space, $ \hat{c} \in X$, and suppose  that $T: X \to X$ is a Fr\'echet differentiable mapping.
  	Suppose that $ Y_0, Z_1$ are positive constants and   $ Z_2: (0,\infty) \to [0, \infty)$ is a positive function, having   
  	\begin{align}\label{eq:Y0_thm}
  	\|T(\hat{c}) - \hat{c} \|_X &\leq Y_0 ,
  	\end{align}
  	and that 
  	\begin{align}\label{eq:Zbound_thm}
  	\sup_{c \in \overline{B_r( 0)}} \| DT( \hat{c}+c)\|  &\leq Z_1 +  Z_2(r) r 
  	\end{align}
  	for all $ r > 0$. 
  	If there is an $r>0$ so that 
  	\begin{align}\label{eq:RadiiPolynomial}
  	\underbrace{Z_2(r)r^2-(1-Z_1)r + Y_0}_{=:P(r)} \leq 0  
  	\end{align}
  	then there exists a unique $ \tilde{c} \in \overline{B_r(\hat{c})}$ such  that $ T(\tilde{c}) = \tilde{c}$. 
  \end{theorem}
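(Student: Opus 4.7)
The plan is to view this statement as a parameterized Banach fixed point theorem and prove it by verifying the two standard hypotheses on the closed ball $\overline{B_r(\hat{c})}$: (i) that $T$ maps the ball into itself, and (ii) that $T$ is a strict contraction there. Both existence and uniqueness of $\tilde{c}$ will then follow directly from Banach's theorem. The key analytic tool is the integral form of the mean value inequality for Fr\'echet differentiable maps: for any $c_1, c_2 \in \overline{B_r(\hat{c})}$ one has
\begin{equation*}
T(c_1) - T(c_2) = \int_0^1 DT\bigl(c_2 + s(c_1 - c_2)\bigr)[c_1 - c_2]\, ds,
\end{equation*}
and since the segment from $c_1$ to $c_2$ lies entirely in $\hat{c} + \overline{B_r(0)}$ by convexity, hypothesis \eqref{eq:Zbound_thm} produces the uniform operator bound
\begin{equation*}
\|T(c_1) - T(c_2)\| \leq \bigl(Z_1 + Z_2(r)\, r\bigr)\|c_1 - c_2\|.
\end{equation*}

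For step (i) I take an arbitrary $c \in \overline{B_r(\hat{c})}$, apply the mean value bound with $c_2 = \hat{c}$, then use the triangle inequality together with the $Y_0$ bound \eqref{eq:Y0_thm} to obtain
\begin{equation*}
\|T(c) - \hat{c}\| \leq \|T(c) - T(\hat{c})\| + \|T(\hat{c}) - \hat{c}\| \leq \bigl(Z_1 + Z_2(r)\, r\bigr) r + Y_0 = r + P(r).
\end{equation*}
The hypothesis $P(r) \leq 0$ is therefore precisely what is needed to conclude $\|T(c) - \hat{c}\| \leq r$, i.e.\ $T(\overline{B_r(\hat{c})}) \subseteq \overline{B_r(\hat{c})}$. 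For step (ii), the same mean value estimate identifies the Lipschitz constant of $T$ on the ball as $\kappa := Z_1 + Z_2(r)\, r$, and dividing $P(r) \leq 0$ by $r > 0$ rearranges to $\kappa \leq 1 - Y_0/r$. When $Y_0 > 0$ this gives $\kappa < 1$ strictly, and Banach's fixed point theorem produces a unique $\tilde{c} \in \overline{B_r(\hat{c})}$ with $T(\tilde{c}) = \tilde{c}$. The borderline case $Y_0 = 0$ is harmless: \eqref{eq:Y0_thm} then forces $T(\hat{c}) = \hat{c}$ so existence is automatic, and the standard computer-assisted convention (under which the theorem is invoked) is to impose $P(r) < 0$ strictly, which preserves uniqueness by the contraction argument above.

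This is a classical Newton--Kantorovich argument and I do not anticipate any genuine obstacle; the only delicate bookkeeping is the borderline case $\kappa = 1$, handled above by noting that the applications always supply strict inequality. The real work lies not in proving Theorem \ref{prop:RadiiPolynomial} itself but in producing computable bounds $Y_0$, $Z_1$, and $Z_2(r)$ for the specific operator $T$ of Definition~\ref{def:Newton_Map} and a truncation-based approximate fixed point $\hat{c}$; that computational task is the content of the forthcoming computer-assisted proof of Theorem~\ref{prop:Intro_Blowup}(a).
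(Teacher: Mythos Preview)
Your argument is correct and is the standard proof of this Newton--Kantorovich/radii-polynomial theorem. Note that the paper does not actually prove Theorem~\ref{prop:RadiiPolynomial}; it is quoted from \cite{james2017validated} without proof, so there is nothing to compare against. One minor remark: the theorem hypothesizes that $Y_0$ is a \emph{positive} constant, so your inequality $\kappa \leq 1 - Y_0/r$ already gives $\kappa < 1$ strictly, and the separate discussion of the borderline case $Y_0 = 0$ is unnecessary.
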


  For any fixed values $A \in \C$ and  $ N \in \N$,  we can explicitly compute the coefficients of $ c(A,1)_{n,j}$  for all $ 1 \leq n \leq N$ and $ n \leq j \leq n^2$. 
  Thus we do not need to worry about $T$ being a contraction on the first finitely many coefficients. 
  More formally, let us define subspaces $ X = X_N \oplus X_\infty$ below
\begin{align*}
	X_N &:= \{
	c \in X : c_{n,j} \neq  0 \mbox{ only if } n \leq N
	\} \\
	X_\infty &:= \{
	c \in X : c_{n,j} \neq 0 \mbox{ only if } n >  N
	\} 
\end{align*}  
and define  projection operators $ \pi_N: X \to X_N $ and $ \pi_\infty : X \to X_\infty$.

For fixed initial data $ u_0(x) = Ae^{i x}$  define $ \hat{c} \in X_N$ such that $ \hat{c} = \pi_N c(A,1)$ where $ c(A,1) \in \cX$ is the unique sequence satisfying \eqref{eq:FF_Recursive}. 
Hence $ \hat{c} = \pi_N ( \hat{c})$ and furthermore $ \hat{c} = \pi_N  T(\hat{c})$. 
Using interval arithmetic to evaluate \eqref{eq:FF_Recursive} we can obtain a rigorous enclosure of  every component of $\hat{c}$. 
If $ \| c(A,1)\| < \infty$, then there exists some $ c_\infty \in X_ \infty$ such that $ c(A,1)  = \hat{c}+ c_\infty$. 
Moreover $c_\infty$  will be a fixed point of the map 
$
T_{\infty} : X_\infty \to X_\infty 
$  
defined for $u \in X_\infty$ by 
\begin{align}\label{eq:TinftyDef}
	T_\infty (u) &= \pi_\infty T(\cTrunc +u) .
\end{align}
Thus for an explicitly computed finite sequence $ \hat{c} \in X_N$, finding a fixed point of $T$ is equivalent to finding a fixed point of $ T_\infty$. To do so, we will aim to apply Theorem \ref{prop:RadiiPolynomial} for the map $T_\infty$ in a ball $ B_{r}^{\infty}(0) = \{ c \in X_{\infty} : \| c \| \leq r \}$.

We first prove a lemma which defines $Y_0,Z_1,Z_2$ and shows that they satisfy the inequalities in \eqref{eq:Y0_thm} and \eqref{eq:Zbound_thm}.
We also note that as $\pi_\infty T( \cTrunc )    \in X_{2N}$, one could obtain  the sharpest $Y_0$ bound by computing an interval enclosure of $\|T_\infty(0) \| $ using finitely many arithmetic operations. 
However  this can be computationally expensive if $N$ is large.  
We instead use a faster, rougher bound as given in \eqref{eq:YboundCAP}.  
This is no great loss, as the $Z_1$ bound is the most difficult one to control in practice. 

\begin{lemma} \label{prop:RadiiPolynomial_CAP} 
	Fix $(p,d)=(2,1)$, fix   $ \cTrunc \in X_N$ and define $ T_\infty$ as in \eqref{eq:TinftyDef}. 
	Define $b \in \ell^1_{0,1} $ by $	b_{n} = \sum_{n \leq j \leq n^2} |\hat{c}_{n,j}| $ and
	 $Y_0,Z_1,Z_2(r)$ by
	\begin{align} \label{eq:YboundCAP}
		Y_0 &= \frac{1}{\omega^2}
		\sum_{N+1 \leq n \leq 2 N } \frac{(b * b)_n}{n-1}, 
		\\
		Z_1 &= \frac{4}{\omega^2}
		\sum_{1\leq n \leq N } 
		\sum_{n\leq j \leq n^2}  \frac{|\cTrunc_{n,j}|}{n^2 +2 n (N+1)  - j} ,
				\label{eq:Z1bound}
				 \\
		Z_2(r) &=  	\frac{2 r}{\omega^2(N+1)^2} ,
		\label{eq:Z2bound}
	\end{align}
	then the following inequalities are satisfied
	\begin{align*}
	\|T_\infty(0) - 0 \|_{X_\infty} &\leq Y_0 ,
	\\
\| DT_\infty(0)\|_{B(X_\infty)}  &\leq Z_1 , \\
	\sup_{c \in \overline{B_r^\infty( 0)}} \| DT_\infty(c) - DT_\infty(0) \|_{B(X_\infty)}  &\leq Z_2(r) r  ,  \qquad \forall r >0.
	\end{align*}  

\end{lemma}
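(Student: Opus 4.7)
The plan is to prove the three bounds separately, exploiting the structure that for $p=2$ we have $T_\infty(u) = \pi_\infty[\iota\phi + (I-L)K(\cTrunc+u)^2]$. Since the initial data satisfies $\phi_n = 0$ for $n \geq 2$ and $1 \leq N$, the projection kills $\iota\phi$, giving
\begin{align*}
T_\infty(0) &= \pi_\infty(I-L)K\cTrunc^2, \\
DT_\infty(0)h &= 2\pi_\infty(I-L)K(\cTrunc * h), \\
DT_\infty(c)h - DT_\infty(0)h &= 2\pi_\infty(I-L)K(c*h).
\end{align*}
So every bound reduces to estimating $\|\pi_\infty(I-L)Kd\|$ for various $d$. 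The key preliminary observation is that for any $d \in \cX$ with $d_{n,n^2}=0$ whenever $n \geq 2$, the triangle inequality together with the definitions of $I-L$ and $K$ gives $\sum_j |((I-L)Kd)_{n,j}| \leq \frac{2}{\omega^2}\sum_{n \leq j < n^2}\frac{|d_{n,j}|}{n^2-j}$, because the ``$-L$'' part at $j=n^2$ is, in absolute value, bounded by the same sum as the ``$I$'' part. By Lemma \ref{prop:Xclosed} both $\cTrunc^2$ and $\cTrunc*h$ and $c*h$ satisfy the hypothesis $d_{n,n^2}=0$, so this inequality applies throughout.

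For $Y_0$, I would apply the preliminary estimate to $d=\cTrunc^2$, which by Lemma \ref{prop:Xclosed} is supported in $\{(n,j): 2 \leq n \leq 2N,\ n \leq j \leq n^2-2(n-1)\}$. The support gives the uniform denominator bound $n^2-j \geq 2(n-1)$, and the Banach algebra inequality applied termwise yields $\sum_j|(\cTrunc^2)_{n,j}| \leq (b*b)_n$. Summing over $N+1 \leq n \leq 2N$ (the range where $\pi_\infty \cTrunc^2$ lives) produces the stated $Y_0$.

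For $Z_1$, apply the preliminary estimate to $d=\cTrunc*h$, expand the Cauchy product, and exchange the order of summation so that $(n_1,j_1)$ (ranging over support of $\cTrunc$, so $1\leq n_1 \leq N$) is outer and $(n_2,j_2)$ (ranging over support of $h\in X_\infty$, so $n_2 \geq N+1$ and $j_2 \leq n_2^2$) is inner. The crucial denominator estimate is
\begin{align*}
(n_1+n_2)^2 - j_1 - j_2 \;\geq\; n_1^2 + 2 n_1 n_2 - j_1 \;\geq\; n_1^2 + 2 n_1 (N+1) - j_1,
\end{align*}
which is uniform in $(n_2,j_2)$. Pulling this factor out and bounding the inner sum $\sum_{n_2,j_2}|h_{n_2,j_2}| \leq \|h\|$ produces exactly the $Z_1$ in \eqref{eq:Z1bound}, after accounting for the factor $2$ from $DT$ and the factor $2$ from the preliminary estimate.

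For $Z_2$, run the same argument for $d = c*h$ with $c \in X_\infty$. Now both $n_1$ and $n_2$ are $\geq N+1$, so the sharper denominator bound $(n_1+n_2)^2 - j_1 - j_2 \geq 2 n_1 n_2 \geq 2(N+1)^2$ is uniform in all indices; factoring it out and applying the Banach algebra property $\|c*h\| \leq \|c\|\|h\|$ gives a constant multiple of $\|c\|\|h\|/(N+1)^2$, which for $\|c\|\leq r$ yields the $Z_2(r)r$ bound. The main obstacle is not any single inequality but the careful bookkeeping around the projector $\pi_\infty$, the action of $I-L$, and the support constraints inherited from Lemma \ref{prop:Xclosed}; one must verify both that the ``$-L$'' contribution exactly doubles the ``$I$'' contribution (rather than creating a $j=n^2$ term on which the denominator bound $n^2-j$ would degenerate) and that the support of the relevant product stays within the regime where the denominator estimate $n^2 - j \gtrsim n$ or $\gtrsim (N+1)^2$ holds.
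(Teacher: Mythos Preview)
Your proposal is correct and follows essentially the same route as the paper. The paper's proof also treats the three bounds separately, computes $DT_\infty(u)h = 2(I-L)\pi_\infty K(\cTrunc+u)*h$, uses $\|I-L\|\leq 2$ to extract the factor of $2$ (equivalent to your ``preliminary observation''), and then derives the same denominator inequalities $(n_1+n_2)^2-j_1-j_2 \geq n_1^2+2n_1(N+1)-j_1$ for $Z_1$ and $\geq 2(N+1)^2$ for $Z_2$ before applying the Banach algebra property; the only cosmetic difference is that the paper invokes the operator norm $\|I-L\|\leq 2$ directly rather than phrasing it as a pointwise estimate depending on $d_{n,n^2}=0$.
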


\begin{proof}

We divide the proof into three parts. 

\textbf{The $Y_0$ Bound.} 
First note that 
	\begin{align}
		\| T_\infty(0) \| &= \| \pi_\infty  (I- L) K \hat{c}^2 \|  \nonumber \\
		&\leq 2 \| \pi_\infty K \hat{c}^2 \| . \label{eq:PreY0}
	\end{align}
	 Using the bound $ | (Kc)_{n,j} | \leq \frac{1}{2\omega^2(n-1)} | c_{n,j}|$ for all $ c \in X$ we obtain 
	 \[
\|	 \pi_{\infty} K \hat{c}^2 \| \leq 
\sum_{N+1 \leq n \leq 2 N } \frac{1}{2n-2} \sum_{n \leq j \leq n^2}
 \left|
\sum_{\substack{n_1 + n_2 = n \\j_1 +j_2 = j}}\hat{c}_{n_1,j_2} \hat{c}_{n_2,j_2} 
\right| .
	 \]
	 Define a sequence $ b \in \ell^1_{0,1} $ by $ b_{n} = \sum_{n \leq j \leq n^2} |\hat{c}_{n,j}| $. 
	 By the Banach algebra property of $\ell^1_{0,1}$ we have 
	 \begin{align*}
\|	 \pi_{\infty} K \hat{c}^2\| &\leq 
\sum_{N+1 \leq n \leq 2 N } \frac{1}{2\omega^2(n-1)} \sum_{n_1 + n_2 = n }
\sum_{\substack{j_1 +j_2 = j \\n \leq j \leq n^2}}
\left| \hat{c}_{n_1,j_2} \hat{c}_{n_2,j_2} 
\right| \\
&\leq 
\sum_{N+1 \leq n \leq 2 N } \frac{1}{2\omega^2(n-1)} \sum_{n_1 + n_2 = n } b_{n_1} b_{n_2} \\
& = 
\sum_{N+1 \leq n \leq 2 N } \frac{(b^2)_n}{2\omega^2(n-1)} .
	 \end{align*}
	 Combining this with \eqref{eq:PreY0} we see that $Y_0$ defined in \eqref{eq:YboundCAP} satisfies  $ \| T_\infty(0) \| \leq Y_0$. 
	 \newline

	To obtain the $Z_1$ and $Z_2$  bounds,  we first compute the action of the  Fr\'echet derivative $DT_\infty$  on a vector $ h \in X_\infty$ below
\begin{align*}
	DT_\infty(u)h 
	&= 
	\pi_{\infty } DT(\cTrunc+u) \cdot h \\
	&= 2  (I-L)  \pi_{\infty }  K (\cTrunc + u) * h .
\end{align*}
Using $ \| I - L \| \leq 2$ then  
\begin{align}\label{eq:ZboundStart}
	\sup_{u \in B^\infty_r(0)} 
\|		DT_\infty(u) \| 	&\leq 
4 
\left(
\sup_{ h \in X_\infty, \|h\|=1} 
\| \pi_{\infty }  K \cTrunc *h  \|
+ 
\sup_{\substack{ h \in X_\infty, \|h\|=1 \\ u \in B^\infty_r(0) }}
\| \pi_{\infty }  K u *h \|
\right) .
\end{align}

\textbf{The $Z_1$ Bound.} 
To bound these terms we first focus on $	K \cTrunc * h $, for which the absolute value of its components may be bounded as  
\begin{align}
\left|	(K \cTrunc * h)_{n,j} \right|  &\leq
\frac{1}{\omega^2}
\sum_{\substack{n_1+n_2 =n\\j_1+j_2=j}}  \frac{1}{(n_1+ n_2)^2 - (j_1+j_2)} |\cTrunc_{n_1,j_1} h_{n_2,j_2}|  
\nonumber
\\
&\leq 
\frac{1}{\omega^2}
\sum_{\substack{n_1+n_2 =n\\j_1+j_2=j}}  \frac{1}{n_1^2 +2 n_1 n_2  - j_1} |\cTrunc_{n_1,j_1} h_{n_2,j_2}|
\nonumber
   \\
&\leq 
\frac{1}{\omega^2}
\sum_{\substack{n_1+n_2 =n\\j_1+j_2=j}} 
\left|
\frac{\cTrunc_{n_1,j_1} }{n_1^2 +2 n_1 (N+1)  - j_1}
\right| |h_{n_2,j_2}|	  .
\label{eq:Z1ComponentBound}
\end{align}
Here we first used the fact that   $  j_2 \leq n_2^2$, and   then \eqref{eq:Z1ComponentBound} follows from  the fact that  $ h \in X_\infty$ so $h_{n,j}=0$ for $n \leq N$. 
Setting aside $ 1/\omega^2$ for a moment, the terms in \eqref{eq:Z1ComponentBound} may be given by the Cauchy product of the sequence  $\{ |h_{n,j}|\} \in X_\infty$ and the sequence $\left\{ \left|
\frac{\cTrunc_{n,j} }{n^2 +2 n (N+1)  - j}
\right| \right\} \in X_N$. 
Using the Banach algebra property of $ X = X_N \oplus X_{\infty}$, it follows that  
\begin{align}\label{eq:Z1}
	\sup_{h \in X_\infty; \|h \| \leq 1} 
	\| K \cTrunc * h \| 
	&\leq 
	\frac{1}{\omega^2}
	\sum_{1\leq n \leq N } 
	\sum_{n\leq j \leq n^2}
	\left|
	\frac{\cTrunc_{n,j} }{n^2 +2 n (N+1)  - j}
	\right|
	.
\end{align}
Hence, by combining \eqref{eq:Z1} into  \eqref{eq:ZboundStart} we see that $Z_1$ defined in \eqref{eq:Z1bound} satisfies $
\| DT_\infty(\hat{c})\|_{B(X_\infty)}  \leq Z_1$.

\textbf{The $Z_2$ Bound.} 
To bound the $Z_2$ term, we now focus on $	K u * h $,  for which the absolute value of its components may be bounded as  
\begin{align}
\left|	(K u * h)_{n,j} \right|   &=
	\frac{1}{\omega^2}
	\sum_{\substack{n_1+n_2 =n\\j_1+j_2=j}}  \frac{1}{(n_1+ n_2)^2 - (j_1+j_2)} |u_{n_1,j_1} h_{n_2,j_2}| 
	\nonumber
	\\
		&\leq 
			\frac{1}{\omega^2}
			\sum_{\substack{n_1+n_2 =n\\j_1+j_2=j}}  \frac{1}{ 2 n_1 n_2  } |u_{n_1,j_1} h_{n_2,j_2}| 
			\nonumber
	\\
		&\leq 
			\frac{1}{\omega^2}
	\sum_{\substack{n_1+n_2 =n\\j_1+j_2=j}} 
	\left|
	\frac{u_{n_1,j_1} }{2 (N+1)^2 }
	\right| |h_{n_2,j_2}| .
	\label{eq:Z2ComponentBound}
\end{align}
 In the first inequality we  used the fact that that both $    j_1 \leq n_1^2$ and $    j_2 \leq n_2^2$. 
 For the second inequality we used the fact that as    both  $u, h \in X_\infty$ then  $| u_{n_1,j_1} h_{n_2,j_2}| \neq 0$ only if $ n_1,n_2 \geq N+1$. 
Setting aside $ 1/\omega^2$ for a moment, the terms in \eqref{eq:Z2ComponentBound} may be given by the Cauchy product of the sequence  $\{ |h_{n,j}|\} \in X_\infty$ and the sequence $\left\{ 	\left|
 \frac{u_{n,j} }{2 (N+1)^2 }
 \right|  \right\}\in X_\infty$. 
Using $\| u \| \leq r $ and  the Banach algebra property, it follows that  
\begin{align}\label{eq:Z2}
\sup_{u \in B^\infty_r(0)}
	\sup_{ h \in X_\infty, \|h\|=1 } 
	\| K u * h \| 
	&\leq \frac{r}{2 \omega^2 (N+1)^2}. 
\end{align}
Hence, by combining  \eqref{eq:Z2} into  \eqref{eq:ZboundStart} we see that $Z_2(r)$ defined in \eqref{eq:Z2bound} satisfies  
\[
\sup_{u \in B^\infty_r(0)}
\sup_{ h \in X_\infty, \|h\|=1 } 
\|	DT_\infty(\cTrunc + u) \cdot  h \|  \leq  Z_1 + Z_2(r) .
\]

\end{proof}

We now present the computer assisted proof of Theorem \ref{prop:Intro_Blowup} (a). 
	The source code for this  computer assisted proof is available at \cite{bib:codesIntegrableNonconservativeNLS} and  uses the interval arithmetic library INTLAB version 10.1 \cite{Ru99a}. 
The computation was run using  
MATLAB version  2020b and took $60.7$ minutes on a Intel i7-8750H processor.

\begin{proof}[Proof of Theorem \ref{prop:Intro_Blowup} (a)]	
	First we consider the initial data $u_0(x) =   A e^{ i \omega  x}$  for $ A=3$ and $ \omega =1 $. 
	Fix $  c(3,1) \in \cX$, the solution to the recursion relation in \eqref{eq:FF_Recursive}. 
	For fixed $N = 110 $ we  use interval arithmetic to  compute a rigorous enclosure of  $\cTrunc = \pi_N c(3,1)$. 
	To prove $\|c(3,1)\| < \infty$ it suffices to show that $ T_\infty$, as defined in \eqref{eq:TinftyDef}, has a fixed point in some ball $ B_r^\infty(0)$. 
	
	We prove that $ T_\infty$ has a fixed point by way of Theorem  \ref{prop:RadiiPolynomial}. 
	Using interval arithmetic we compute $Y_0, Z_1, Z_2(r)$ as defined in \eqref{eq:YboundCAP}-\eqref{eq:Z2bound},  
	which by Lemma \ref{prop:RadiiPolynomial_CAP}, 
	satisfy the inequalities \eqref{eq:Y0_thm} and \eqref{eq:Zbound_thm}. 	
	For $P(r)$ defined in \eqref{eq:RadiiPolynomial} and fixed $r_0 = 32$, we use interval arithmetic to show that $P(r_0) <  0$. 
	Hence by 	Theorem  \ref{prop:RadiiPolynomial} it follows that there exists a unique $ c_\infty \in \overline{B_r^\infty( 0)}$ such  that $ T_\infty(c_\infty) = c_\infty$. 
	Thereby  $ c(3,1) =  \hat{c} + c_\infty $  and furthermore $ \| c(3,1) \|_X < \infty$.

	 More generally, for any $ A \in \C$ and $\omega>0$ fix $ c(A,\omega) \in \cX$ as the unique solution to the recursion relation in \eqref{eq:FF_Recursive} for the initial data $ u_0(x) = A e^{i \omega x}$. 
	From Theorem \ref{prop:Rescaling} we have 
	$
	c(A,\omega )_{n,j} = \omega^{-2} \left(
	\frac{A}{\omega^2}
	\right)^n
	\big( c(1,1) \big)_{n,j}
	$. 
	The argument above proved that $c(3,1) = T( c(3,1) )$ and  $ \| c(3,1) \|_X < \infty$. 
	This latter statement  may be restated as follows
	\[
	\| c(3,1) \|_{X} 
	=
	\sum_{1\leq n \leq \infty} \sum_{n \leq j \leq n^2} |c(3,1)_{n,j}|
		=
	\sum_{1\leq n \leq \infty} \sum_{n \leq j \leq n^2} 
	1^{-2} \left(
	\tfrac{3}{1^2}
	\right)^n
	\left|	\big( c(1,1) \big)_{n,j} \right|
	< + \infty .
	\]
	It follows that if $ |A|/\omega^2 \leq 3$, then $\| c(A,\omega) \|_{X} \leq \omega^{-2} \| c(3,1) \|_{X} < \infty$.

\end{proof}

By computing more coefficients $\{c_{n,j}\}$ for $1 \leq n \leq N$ and $n \leq j \leq n^2$, then one could more closely approximate the critical value $A^* \approx 3.37$ from below.  
However the memory requirements of the present computer assisted proof are of order $ \cO(N^3)$, and taking a large value of $N$ is computationally expensive. 
On the other hand,  there is ample room for improvement for reducing the bound $ A^* \leq 6$ from Theorem \ref{prop:Intro_Blowup} (b).   
Similar questions on the (quasi)periodicity/blowup of monochromatic initial data may also be asked of \eqref{eq:NLS} for other values of $ p \geq 3$ and $ d \geq 2$.

\section*{Acknowledgments} 
The author would like to thank  M. Beck, A. Delshams,  J.P. Lessard, E. Miranda, A. Takayasu,   and C.E. Wayne for informative discussions.

\bibliography{Bib_NLS}
\bibliographystyle{alpha}

\end{document}